\numberwithin{equation}{section}
\def\@settitle{\begin{center}%
  \baselineskip14\p@\relax
  \bfseries
  \uppercasenonmath\@title
  \@title
  \ifx\@subtitle\@empty\else
     \\[1ex]\uppercasenonmath\@subtitle
     \footnotesize\mdseries\@subtitle
  \fi
  \end{center}%
}
\def\subtitle#1{\gdef\@subtitle{#1}}
\def\@subtitle{}
\theoremstyle{plain}
\newtheorem{thm}{Theorem}[subsection] 
\theoremstyle{definition}
\newtheorem{hyp}[thm]{Hypothesis}
\newtheorem{rem}[thm]{Remark}
\newtheorem*{rmk-intro}{Remark}
\theoremstyle{definition}
\theoremstyle{plain}
\newtheorem{prop}[thm]{Proposition}
\theoremstyle{plain}
\newtheorem{lemma}[thm]{Lemma}
\theoremstyle{plain}
\newtheorem{cor}[thm]{Corollary}
\theoremstyle{plain}
\newtheorem{conj}[thm]{Conjecture}
\theoremstyle{plain}
\theoremstyle{plain}
\newcounter{parentnumber}
\DeclareMathOperator{\ord}{ord}
\newcommand{\Gal}{\operatorname{Gal}}
\newcommand{\colim@}[2]{%
  \vtop{\m@th\ialign{##\cr
    \hfil$#1\operator@font colim$\hfil\cr
    \noalign{\nointerlineskip\kern1.5\ex@}#2\cr
    \noalign{\nointerlineskip\kern-\ex@}\cr}}%
}
\newcommand{\colim}{%
  \mathop{\mathpalette\colim@{\rightarrowfill@\scriptscriptstyle}}\nmlimits@
}
\renewcommand{\varprojlim}{%
  \mathop{\mathpalette\varlim@{\leftarrowfill@\scriptscriptstyle}}\nmlimits@
}
\renewcommand{\varinjlim}{%
  \mathop{\mathpalette\varlim@{\rightarrowfill@\scriptscriptstyle}}\nmlimits@
}
\newcommand{\Z}{\mathbb{Z}}
\newcommand{\Q}{\mathbb{Q}}
\newcommand{\X}{\mathfrak{X}}
\font\wncyr=wncyr9.8
\newcommand{\sha}{\text{\wncyr{W}}}
\newcommand{\CM}{\mathcal{M}}
\newcommand{\Lcal}{\mathcal{L}}
\newcommand{\CF}{\mathcal{F}}
\newcommand{\Fcal}{\mathcal{F}}
\newcommand{\rH}{\mathrm{H}}
 \newcommand{\cO}{{\mathcal{O}}}
 \newcommand{\CL}{{\mathcal{L}}}
 \newcommand{\ra}{\rightarrow}
 \newcommand{\fn}{{\mathfrak{n}}}
 \newcommand{\BI}{{\mathbb {I}}}
\begin{document}
\title{Base change and Iwasawa Main Conjectures for ${\rm GL}_2$}

\author{Ashay Burungale}
\address[A.~Burungale]{The University of Texas at Austin, 2515 Speedway, Austin, TX 78712, USA}
\email{ashayk@utexas.edu }

\author{Francesc Castella}
\address[F.~Castella]{University of California Santa Barbara, South Hall, Santa Barbara, CA 93106, USA}
\email{castella@ucsb.edu}

\author{Christopher Skinner} 
\address[C.~Skinner]{Princeton University, Fine Hall, Washington Road, Princeton, NJ 08544-1000, USA}
\email{cmcls@princeton.edu}

\begin{abstract}
Let $E$ be an elliptic curve defined over $\Q$ of conductor $N$, $p$ an odd prime of good ordinary reduction such that $E[p]$ is an irreducible Galois module, and $K$ an imaginary quadratic field with all primes dividing $Np$ split. 
We prove Iwasawa Main Conjectures for the $\Z_p$-cyclotomic and $\Z_p$-anticyclotomic deformations of 
$E$ over $\Q$ and $K$ respectively, dispensing with any of the ramification hypotheses on $E[p]$ in previous works. 

The strategy employs base change  and the two-variable zeta element associated to $E$ over $K$, via which the sought after main conjectures are deduced from Wan's divisibility towards a three-variable 
main conjecture for $E$ over a quartic CM field containing $K$ and certain Euler system divisibilities.

As an application, we prove cases of the two-variable main conjecture for $E$ over $K$. 
The aforementioned one-variable main conjectures imply the $p$-part of the conjectural Birch and Swinnerton-Dyer formula for $E$ if $\ord_{s=1}L(E,s)\leq 1$. They are also an ingredient in the proof of Kolyvagin's conjecture and its cyclotomic variant in our joint work with Grossi \cite{BCGS}. 

\end{abstract}

\date{\today}
\maketitle
\tableofcontents


\section{Introduction}\label{subsec:app-results}

Let $E$ be an elliptic curve defined over $\Q$, $p$ an odd prime of good ordinary reduction for $E$, and $K$ an imaginary quadratic  field. In this paper we study Iwasawa theory of $E$ along the cyclotomic $\Z_p$-extension of $\Q$ and the anticyclotomic $\Z_p$-extension of $K$, proving corresponding Iwasawa Main Conjectures (see~Theorems~\ref{thm:cyc},~\ref{thm:HPMC}~and~\ref{thm:BDP}).

The main text also treats the case of weight two elliptic newforms with good ordinary reduction at $p$. In the introduction, for simplicity, we present results for elliptic curves. 


\subsection{Cyclotomic Main Conjecture}

Let $\Q_\infty$ be the cyclotomic $\Z_p$-extension of $\Q$. Put $\Gamma={\rm Gal}(\Q_\infty/\Q)$, and let $\Lambda=\Z_p[\![\Gamma]\!]$ be the cyclotomic Iwasawa algebra. 

We consider the classical Selmer group ${\rm Sel}_{p^\infty}(E/\Q_\infty)=\varinjlim_n{\rm Sel}_{p^\infty}(E/\Q_n)$, where $\Q_n$ is the subfield of $\Q_\infty$ with $[\Q_n:\Q]=p^n$. Its Pontryagin dual 
\[
\mathfrak{X}_{\rm ord}(E/\Q_\infty):=
{\rm Hom}_{\Z_p}({\rm Sel}_{p^\infty}(E/\Q_\infty),\Q_p/\Z_p)
\]
is a finitely generated $\Lambda$-module. Let $\CL_p^{}(E/\Q)\in\Lambda\otimes\Q_p$ be the $p$-adic $L$-function attached to $E$ by Mazur--Swinnerton-Dyer \cite{M-SwD}. In \cite{mazur-towers}, Mazur conjectured the following.  

\begin{conj}[Mazur's Main Conjecture]
\label{conj:cyc} 
The $\Lambda$-module $\X_{\rm ord}(E/\Q_{\infty})$ is torsion, with
\[
{\rm ch}_\Lambda\left( \X_{\rm ord}(E/\Q_{\infty})\right)=(\Lcal_p(E/\Q))
\]
as ideals in $\Lambda$. 
\end{conj}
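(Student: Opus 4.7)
The plan is to deduce Mazur's conjecture from a three-variable main conjecture for $E$ over a quartic CM field. First, I would choose a totally real quadratic field $F^+/\Q$ such that $F := KF^+$ is a quartic CM field in which every prime of $\Q$ above $Np$ splits completely, which is possible because, by assumption, every such prime already splits in $K/\Q$. Let $F_\infty/F$ be the compositum of the cyclotomic $\Z_p$-extension with the $\Z_p^2$-anticyclotomic extension associated with $F/F^+$, set $\Gamma_F := \mathrm{Gal}(F_\infty/F)$, and let $\Lambda_F := \Z_p[\![\Gamma_F]\!]$. Attached to $E/F$ are a Selmer module $\X_{\rm ord}(E/F_\infty)$ and a three-variable $p$-adic $L$-function $\Lcal_p(E/F) \in \Lambda_F$; the target of the argument is the equality $\mathrm{char}_{\Lambda_F}\bigl(\X_{\rm ord}(E/F_\infty)\bigr) = (\Lcal_p(E/F))$, from which the desired identity in $\Lambda$ will follow by specialization along the cyclotomic slice.

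For the upper divisibility, I would appeal to Wan's theorem, which under our hypotheses (good ordinary reduction, residual irreducibility of $E[p]$, split condition at $Np$) yields $\Lcal_p(E/F) \mid \mathrm{char}_{\Lambda_F}(\X_{\rm ord}(E/F_\infty))$ via an Eisenstein-ideal construction on a suitable unitary Shimura variety. The point of working over $F$ rather than directly over $\Q$ or $K$ is precisely that Wan's argument dispenses with the ramification hypotheses on $E[p]$ that appear in the Skinner--Urban divisibility. For the reverse divisibility, I would feed in the Euler-system side: Kato's cyclotomic Euler system over $\Q$ together with the Heegner/Bertolini--Darmon--Prasanna anticyclotomic classes over $K$ give enough control over the three-variable characteristic ideal to deduce the reverse divisibility after specialization. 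Matching the two sides then proceeds by specializing along $\Lambda_F \twoheadrightarrow \Lambda$: analytically, the base-change factorization of $L(E_F,s)$ into $L(E/\Q,s)$ and an anticyclotomic Asai-type factor over $K$ must lift to $p$-adic $L$-functions, identifying the cyclotomic specialization of $\Lcal_p(E/F)$ with $\Lcal_p(E/\Q)$ times an explicit anticyclotomic companion; algebraically, a Greenberg-type control theorem identifies the specialization of $\X_{\rm ord}(E/F_\infty)$ with $\X_{\rm ord}(E/\Q_\infty)$ up to a controllable anticyclotomic cofactor.

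The main obstacle will be the descent. First, Wan's divisibility is a priori only an equality of ideals in $\Lambda_F \otimes_{\Z_p} \Q_p$, so one has to establish integrality along the cyclotomic slice, which requires a nonvanishing statement and control of $\mu$-invariants. Second, the control theorem must have error terms that do not pollute the specialized characteristic ideal; here the hypothesis that every prime dividing $Np$ splits in $K$, and hence in $F$, is essential, since it ensures that the Panchishkin conditions at $p$ and the Greenberg conditions at primes of bad reduction propagate compatibly between $\Q$, $K$ and $F$. Third, the interpolation factors comparing $\Lcal_p(E/F)$ to $\Lcal_p(E/\Q)$ times its anticyclotomic companion must be generic units in $\Lambda$, so that the analytic side descends without introducing spurious factors. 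Once these three points are handled, combining the Wan divisibility and the Euler-system divisibility along the cyclotomic slice yields the equality of characteristic ideals asserted in the conjecture.
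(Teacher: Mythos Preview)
Your outline has the right skeleton---base change to a quartic CM field $M=F^+K$, invoke Wan's divisibility, and descend---but several of the key mechanisms are misidentified, and as written the argument would not close.

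First, the factorization is not ``$\Lcal_p(E/\Q)$ times an anticyclotomic Asai-type factor.'' Projecting Wan's three-variable object to $\Lambda_K$ one gets $\bigl(\CL_p^{\rm PR}(g/K)\cdot\CL_p^{\rm PR}(g^{F^+}/K)\bigr)$, where $g^{F^+}$ is the \emph{quadratic twist} by $F^+/\Q$; descending further to the cyclotomic line yields the product of four one-variable $p$-adic $L$-functions $\Lcal_p(g/\Q)\cdot\Lcal_p(g^K/\Q)\cdot\Lcal_p(g^{F^+}/\Q)\cdot\Lcal_p(g^{F^+K}/\Q)$, and the Selmer side factors identically. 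You never isolate $\Lcal_p(E/\Q)$ analytically; instead the paper obtains a single \emph{product} divisibility in $\Lambda$ and matches it against Kato's four separate one-variable divisibilities to force equality in each factor. In particular, no three-variable equality is ever proved, and Heegner/BDP classes play no role in the cyclotomic argument---they enter only for Theorems~\ref{thm:HPMC} and~\ref{thm:BDP}.

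Second, your choice of $F^+$ (``every prime above $Np$ splits completely'') is wrong for the purpose at hand. The paper takes $p$ \emph{inert} in $F^+$, and crucially requires primes $\ell\mid N$ with $\ell\equiv -1\pmod p$ to be inert in $F^+$, so that the residue field has size $\ell^2\equiv 1\pmod p$. This is exactly what renders Wan's vexing-prime hypothesis (H3) vacuous and is the mechanism by which \eqref{eq:mult} is eliminated; without this careful choice Wan's theorem does not apply. Your instinct about integrality is correct, though: the upgrade from $\Lambda_K\otimes\Q_p$ to $\Lambda_K$ goes through the $\mu=0$ result for $\CL_p^{\rm BDP}$ (Proposition~\ref{mu}) combined with the ord/Gr equivalence coming from the Beilinson--Flach classes of \cite{BSTW} (Proposition~\ref{Eq-MC}).
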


Note that implicit in Conjecture~\ref{conj:cyc} is the integrality statement $\Lcal_p(E/\Q)\in\Lambda$; this is most well-understood  under the assumption that $p$ is odd and
\begin{equation}\label{eq:irr}
\tag{irr$_\Q$}
\text{$E[p]$ is an irreducible $G_\Q$-module}
\end{equation}
(see \cite[Prop.~3.1]{greenvats}) where $G_{\Q}:=\Gal(\overline{\Q}/\Q)$ is
the absolute Galois group of $\Q$. (We similarly use $G_L$ to denote the absolute Galois group of a number field $L$.) Let $T$ be the $p$-adic Tate module of $E$. 

In \cite{kato-euler-systems}, Kato proved the $\Lambda$-torsionness of $\mathfrak{X}_{\rm ord}(E/\Q_\infty)$ and the inclusion $p^c\cdot\Lcal_p(E/\Q)\in{\rm ch}_\Lambda(\mathfrak{X}_{\rm ord}(E/\Q_\infty))$ for some $c\geq 0$, with $c=0$ when $T$ has large image. These results are consequences of his seminal construction of an Euler system for $T$.
Let $N$ be the conductor of $E$. Assuming further that
\begin{equation}\label{eq:mult}\tag{mult}
\textrm{there exists a prime $q\Vert N$ such that $E[p]$ is ramified at $q$,}
\end{equation} 
the converse divisibility, and  hence Conjecture~\ref{conj:cyc} was proved by Skinner--Urban \cite{skinner-urban}. Their work employs Eisenstein congruences on the unitary group ${\rm GU}(2,2)$ over  imaginary quadratic fields. 

Our main result towards Conjecture~\ref{conj:cyc} removes the hypothesis \eqref{eq:mult}:

\begin{thm}\label{thm:cyc}
Let $E$ be an elliptic curve defined over $\Q$ and $p$ a prime of good ordinary reduction for $E$. 
\begin{itemize}
\item[(a)] If $p>3$ satisfies \eqref{eq:irr}, then $\X_{\rm ord}(E/\Q_{\infty})$ is $\Lambda$-torsion, with
\[
{\rm ch}_{\Lambda_\Q}\left(\X_{\rm ord}(E/\Q_{\infty})\right)=\bigl(\Lcal_p(E/\Q)\bigr)
\]
in $\Lambda\otimes\Q_p$. 
\item[(b)] If in addition
\begin{equation}\label{im}\tag{im}
\text{there exists an element $\sigma \in G_{\Q(\mu_{p^\infty})}$ such that $T/(\sigma-1)T\simeq\Z_p$,} 
\end{equation}
then the equality holds in $\Lambda$, and hence Conjecture~\ref{conj:cyc} holds.
\end{itemize}
\end{thm}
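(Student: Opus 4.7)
The plan is to combine Kato's divisibility with a descent from Wan's three-variable divisibility over a quartic CM field. By \cite{kato-euler-systems}, under \eqref{eq:irr} the module $\X_{\rm ord}(E/\Q_\infty)$ is already $\Lambda$-torsion and $p^c\cdot\Lcal_p(E/\Q)$ lies in ${\rm ch}_\Lambda(\X_{\rm ord}(E/\Q_\infty))$ for some $c\ge 0$, with $c=0$ under \eqref{im}. What remains is the opposite divisibility, established under \eqref{eq:mult} by \cite{skinner-urban}. To remove that hypothesis, I would work at the level of a biquadratic CM field over which an Eisenstein-congruence method applies without needing a prime of bad multiplicative reduction where $E[p]$ ramifies.

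\textbf{Base change to a quartic CM field.} With $K$ as in the statement, choose an auxiliary imaginary quadratic field $K'$, linearly disjoint from $K$, so that $F:=KK'$ is a quartic biquadratic CM field in which every prime dividing $Np$ splits, chosen so as to satisfy all of Wan's running hypotheses and to preserve residual irreducibility under each of the relevant quadratic twists of $E$. Let $F_\infty/F$ be the $\Z_p^3$-extension built from the cyclotomic direction together with the anticyclotomic directions of $K$ and of $K'$, with Iwasawa algebra $\Lambda_F$. Wan's divisibility towards the three-variable main conjecture then gives
\[
{\rm ch}_{\Lambda_F}\bigl(\X(E/F_\infty)\bigr)\,\Big|\,\Lcal_p^F(E)
\]
in $\Lambda_F$, via the Eisenstein-congruence method on the unitary group of a Hermitian space relative to $F/F^+$. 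The decisive feature is that this divisibility over $F$ does not require \eqref{eq:mult}.

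\textbf{Descent and conclusion.} The Artin formalism factors the base-changed $L$-function as
\[
L(E/F,s)=L(E,s)\cdot L(E\otimes\chi_K,s)\cdot L(E\otimes\chi_{K'},s)\cdot L(E\otimes\chi_{KK'},s),
\]
where $\chi_K,\chi_{K'},\chi_{KK'}$ are the three nontrivial characters of $\Gal(F/\Q)$, with a matching factorization on the $p$-adic side. I would specialize Wan's divisibility along the characters that are trivial on both anticyclotomic directions and invoke a control theorem to identify the image with a product over $\Lambda$ of characteristic ideals for $E$ and for each of its three quadratic twists by $\chi_K,\chi_{K'},\chi_{KK'}$. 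Since each twist inherits \eqref{eq:irr} for generic $K'$, Kato's theorem applies to it and shows that the twist contributions can be cancelled against the corresponding twisted cyclotomic $p$-adic $L$-functions, leaving precisely
${\rm ch}_\Lambda(\X_{\rm ord}(E/\Q_\infty))\,|\,\Lcal_p(E/\Q)$ in $\Lambda\otimes\Q_p$. Combined with Kato this yields the equality in $\Lambda\otimes\Q_p$, and \eqref{im} upgrades it to an equality of ideals in $\Lambda$. The hard part will be the descent itself: arranging the control theorem to be exact (controlling pseudo-null kernels and cokernels under the two anticyclotomic specializations), matching the integral normalizations of the three-variable $p$-adic $L$-function with the product of its cyclotomic constituents, choosing $K'$ so as to simultaneously satisfy Wan's hypotheses over $F$ and residual irreducibility for each twist, and controlling the $\mu$-invariants along the two anticyclotomic variables so that no extraneous powers of $p$ survive the specialization.
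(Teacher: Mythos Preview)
Your overall strategy matches the paper's: base-change to a biquadratic CM field, invoke Wan's divisibility there, descend to the cyclotomic line, factor into four cyclotomic pieces, and cancel against Kato's divisibility for each. Two differences are worth flagging.

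First, a cosmetic one: you build the quartic CM field as $KK'$ from two imaginary quadratic fields, while the paper writes it as $M=FK$ with $F$ real quadratic and $K$ imaginary quadratic (the same field, since the third quadratic subfield of your $KK'$ is real). The paper's framing is the natural one for Wan's theorem, which is stated for a Hilbert modular form over the totally real $F$ with a CM extension $M/F$; the conditions needed on $F$ (Proposition~\ref{MC-lb-Sel} and Lemma~\ref{exs}) are most cleanly imposed this way. Note also that there is no $K$ ``in the statement'' of Theorem~\ref{thm:cyc}; both auxiliary fields are chosen in the proof.

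Second, and this is the substantive point: the paper does not descend directly from three variables to one. It first projects to the $\Z_p^2$-extension $K_\infty/K$, and at that intermediate stage passes to the Greenberg-type formulation via the two-variable zeta elements of \cite{BSTW} (Proposition~\ref{Eq-MC}), then uses the known $\mu$-vanishing of $\mathcal{L}_p^{\rm BDP}(g/K)$ (Proposition~\ref{mu}) to upgrade the product divisibility from $\Lambda_K\otimes\Q_p$ to $\Lambda_K$. This detour is not optional for the integral conclusion: the $p$-adic $L$-function factorization (Lemma~\ref{pL-fac}) is only an equality in $\Lambda_K\otimes\Q_p$, so a direct descent loses a power of $p$. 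Your list of ``hard parts'' correctly identifies $\mu$-control as the crux, but you do not supply a mechanism; the paper's route through the BDP side is precisely how this is resolved. Without it your argument yields the equality in $\Lambda\otimes\Q_p$ but not the integral statement under \eqref{im}.
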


\begin{rem}\label{rem:cyc}\hfill
\begin{enumerate}
\item[(i)]{} For non-CM curves the condition \eqref{im} holds for all sufficiently large primes $p$ by Serre's open image theorem \cite{serre}.  In fact, it is expected that $p\geq 37$ suffices. 
\item[(ii)]{} The only prior result towards Conjecture~\ref{conj:cyc}
without assuming the hypothesis \eqref{eq:mult} is due to Wan \cite{wan-hilbert}, based on Eisenstein congruences on the unitary group ${\rm GU}(2,2)$ over CM fields. 
However, it is conditional on a $p$-integral comparison of certain automorphic periods, which still remains open. 
Our proof of Theorem~\ref{thm:cyc} relies on a main result of \cite{wan-hilbert} but sidesteps the  period comparison.
\item[(iii)]{} Under the condition \eqref{eq:irr}, the essential case excluded by Theorem~\ref{thm:cyc} is that of (residually) dihedral primes $p$. 
\end{enumerate}
\end{rem}


\subsection{Anticyclotomic Main Conjectures}

Assume that the discriminant $D_K<0$ satisfies
\begin{equation}\label{eq:intro-disc}
\textrm{$D_K$ is odd and $D_K\neq -3$.}\tag{disc}
\end{equation}
Moreover, assume that $K$ satisfies the \emph{Heegner hypothesis}, namely
\begin{equation}\label{eq:intro-Heeg}
\textrm{every prime $\ell\vert N$ splits in $K$,}\tag{Heeg}
\end{equation} 
and that
\begin{equation}\label{eq:intro-spl}
\textrm{$p=v\bar{v}$ splits in $K$}\tag{spl}
\end{equation}
for $v$ the prime of $K$ above $p$ induced by an embedding $\overline{\Q}\hookrightarrow\overline{\Q}_p$, which we fix throughout. 

Let $K_\infty^-/K$ be the anticyclotomic $\Z_p$-extension,  $\Gamma_K^-={\rm Gal}(K_\infty^-/K)$, and 
$\Lambda_K^-=\Z_p[\![\Gamma_K^-]\!]$ the anticyclotomic Iwasawa algebra. In view of \eqref{eq:intro-Heeg} and the $p$-ordinarity hypothesis, the Kummer images of Heegner points of $p$-power conductor give rise to a $\Lambda_K^-$-adic class
\[
\kappa_1^{\rm Heeg}\in\rH^1_{\Fcal_\Lambda}(K,\mathbf{T}).
\]
Here $\mathbf{T}=\varprojlim_n{\rm Ind}_{K_n^-/K}(T)$, with $K_n^-$ the subfield of $K_\infty^-$ with $[K_n^-:K]=p^n$, and $\rH^1_{\Fcal_\Lambda}(K,\mathbf{T})\subset\rH^1(K,\mathbf{T})$ is the compact ordinary Selmer group\footnote{See e.g. \cite[\S{4.1}]{eisenstein} for a review of the construction, whose notations we largely follow.} interpolating the classical Selmer groups $\varprojlim_{m}{\rm Sel}_{p^m}(E/K_n^-)$ as $n$ varies.  Let $\mathfrak{X}_{\rm ord}(E/K_\infty^-)$ be the Pontryagin dual of ${\rm Sel}_{p^\infty}(E/K_\infty^-)=\varinjlim_n{\rm Sel}_{p^\infty}(E/K_n^-)$. 

The formulation of a Main Conjecture in this setting is due to Perrin-Riou \cite{perrinriou}.

\begin{conj}[Heegner point Main Conjecture]\label{conj:HPMC}
The $\Lambda_K^-$-modules $\mathfrak{X}_{\rm ord}(E/K_\infty^-)$  and $\rH^1_{\Fcal_\Lambda}(K,\mathbf{T})$ have $\Lambda_K^-$-rank one, and 
\[
{\rm ch}_{\Lambda_K^-}\bigl(\X_{\rm ord}(E/K_\infty^-)_{\rm tor}\bigr)={\rm ch}_{\Lambda_K^-}\bigl(\rH^1_{\Fcal_\Lambda}(K,\mathbf{T})/(\boldsymbol{\kappa}_1^{\rm Heeg})\bigr)^2
\]
as ideals in $\Lambda_K^-$.
\end{conj}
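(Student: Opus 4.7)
The plan is to deduce the stated equality by pairing two divisibilities in $\Lambda_K^-$: one coming from the Heegner Euler system (which also supplies the rank-one assertions), and the other imported from Wan's one-sided divisibility in a three-variable Iwasawa main conjecture over a quartic CM field, descended by base change.

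\textbf{Reformulation via a BDP main conjecture.} I would first reduce the statement to its BDP counterpart via an explicit reciprocity law: the image of $\boldsymbol{\kappa}_1^{\rm Heeg}$ under the unramified-at-$v$ localization is, up to a $p$-adic unit, a square root of the Bertolini--Darmon--Prasanna $p$-adic $L$-function $\Lcal_p^{\rm BDP}(E/K)\in\Lambda_K^-$. Combined with Poitou--Tate global duality, this identifies Conjecture~\ref{conj:HPMC} with the one-variable BDP main conjecture
$$
{\rm ch}_{\Lambda_K^-}\bigl(\X_{\rm ord}(E/K_\infty^-)\bigr) = \bigl(\Lcal_p^{\rm BDP}(E/K)\bigr),
$$
up to explicit local factors at primes of bad reduction. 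This is the form I would actually prove.

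\textbf{Rank one and the Euler-system divisibility.} Vatsal's anticyclotomic nonvanishing mod $p$ implies that $\boldsymbol{\kappa}_1^{\rm Heeg}$ is not $\Lambda_K^-$-torsion, so $\rH^1_{\Fcal_\Lambda}(K,\mathbf{T})$ has $\Lambda_K^-$-rank $\geq 1$, and by Poitou--Tate the same holds for $\X_{\rm ord}(E/K_\infty^-)$. Howard's refinement of Perrin-Riou's Kolyvagin argument, applied to the anticyclotomic Heegner Euler system, upgrades both ranks to exactly $1$ and yields the divisibility
$$
{\rm ch}_{\Lambda_K^-}\bigl(\X_{\rm ord}(E/K_\infty^-)_{\rm tor}\bigr) \ \Bigm| \ {\rm ch}_{\Lambda_K^-}\bigl(\rH^1_{\Fcal_\Lambda}(K,\mathbf{T})/(\boldsymbol{\kappa}_1^{\rm Heeg})\bigr)^2.
$$

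\textbf{Reverse divisibility via base change.} For the opposite divisibility, choose an auxiliary imaginary quadratic field $K'$ in which all primes dividing $Np$ split, so that $F := K\cdot K'$ is a biquadratic CM field of degree $4$ containing $K$. Wan's main result in \cite{wan-hilbert}, obtained from Eisenstein congruences on a unitary group attached to $F$, provides one divisibility in a three-variable Iwasawa main conjecture for $E/F$. Specializing the $K'$-anticyclotomic and cyclotomic variables trivially, performing the standard descent of Selmer groups from $F$ to $K$, and identifying the restricted three-variable $p$-adic $L$-function with $\Lcal_p^{\rm BDP}(E/K)$ via a factorization argument, should produce the divisibility $\bigl(\Lcal_p^{\rm BDP}(E/K)\bigr)\mid {\rm ch}_{\Lambda_K^-}(\X_{\rm ord}(E/K_\infty^-))$ needed to close the argument together with Step 2.

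\textbf{Main obstacle.} The crux -- and the respect in which the argument must go beyond \cite{wan-hilbert} -- is sidestepping the conjectural integral period comparison underlying Wan's bound: the divisibility is a priori stated relative to an automorphic period whose $p$-adic ratio to the canonical Petersson period of $E$ is not known. The intended sidestep is to exploit the CM structure of $F$ along the $K'$-direction used in the descent, matching periods along a CM Hida family (in the spirit of Hsieh) so that on the specialization used for the descent the ambiguous period ratio is forced to be a $p$-adic unit. Carrying this out rigorously while simultaneously controlling the local Selmer conditions at primes of $F$ above the $K'$-ramified primes is the chief technical burden.
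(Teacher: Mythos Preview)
Your high-level strategy---Howard's Euler-system divisibility plus a reverse divisibility imported from Wan via base change to a quartic CM field---matches the paper's, but the mechanism you propose for the base-change step misses the point and would not close the argument as written.

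First, a slip in the reformulation: the BDP main conjecture involves the Greenberg Selmer group $\X_{\rm Gr}(E/K_\infty^-)$ (relaxed at $v$, strict at $\bar v$), not $\X_{\rm ord}(E/K_\infty^-)$; the latter has $\Lambda_K^-$-rank one, so its characteristic ideal is zero and your displayed equality is vacuous.

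More substantively, the descent from the quartic CM field cannot produce a divisibility for $E/K$ alone. Writing $M=KK'$ and $F$ for its real quadratic subfield, Wan's input is a divisibility for the Hilbert form $g_F$ over the CM extension $M/F$. Projecting $\Lambda_M\to\Lambda_K$ and decomposing ${\rm Ind}_K^M T_g\simeq T_g\oplus T_{g^F}$ as $G_K$-modules, one obtains only the \emph{product} divisibility
\[
\bigl(\CL_p^{\rm Gr}(g/K)\cdot\CL_p^{\rm Gr}(g^F/K)\bigr)\supset{\rm ch}_{\Lambda_K}\bigl(\X_{\rm Gr}(g/K_\infty)\bigr)\cdot{\rm ch}_{\Lambda_K}\bigl(\X_{\rm Gr}(g^F/K_\infty)\bigr),
\]
and similarly after anticyclotomic specialization. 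The paper then applies the Howard/Kolyvagin bound to \emph{both} $g$ and its twist $g^F$: the two opposite product divisibilities force equality in the product, hence in each factor. For this one must choose $F$ real quadratic with carefully prescribed splitting behavior at primes dividing $Np$ (so that Wan's hypotheses and \eqref{eq:intro-Heeg} hold simultaneously for $g$ and $g^F$), and invoke Hsieh's $\mu=0$ for $\CL_p^{\rm BDP}$ to pass from $\Lambda_K\otimes\Q_p$ to $\Lambda_K$.

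Finally, your ``main obstacle'' is misdiagnosed. The conjectural integral period comparison afflicts Wan's direct one-variable approach over $\Q$, but his three-variable divisibility over $M$---the input actually used here---is already integral under a Gorenstein hypothesis ensured by the conditions on $F$. There is no period-matching along CM Hida families; what sidesteps the period issue is precisely the product structure above combined with $\mu$-vanishing and the Euler-system bound for the twist.
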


The first general results towards  Perrin-Riou's Heegner point Main Conjecture 
 are due to Bertolini \cite{bertolini-PhD} and Howard \cite{howard}, 
 relying on the Heegner point Kolyvagin system. 
 These works  established the rank statements in Conjecture~\ref{conj:HPMC}, and the latter proved the divisibility ``$\supseteq$'' if 
\begin{equation}\label{eq:irred}
\tag{sur}
\textrm{$\bar{\rho}_E: G_{\Q}\to{\rm Aut}_{\mathbb{F}_p}(E[p])$ is surjective.}
\end{equation}
The first cases of the opposite divisibility, and hence of Conjecture~\ref{conj:HPMC} appeared in \cite{BCK}, which builds on Wei Zhang's resolution of Kolyvagin's conjecture \cite{zhang_ind} and assumes  the hypothesis \eqref{eq:irred} in addition to certain ramification hypotheses on $E[p]$. Via level raising and rank lowering, Wei Zhang's work and \cite{BCK} rely on the results of Skinner--Urban \cite{skinner-urban}, inheriting the hypotheses therein. 
Prior to \cite{BCK}, Wan \cite{wan-GU31} established the first cases of the Heegner point Main Conjecture under a  generalized (non-classical) Heegner hypothesis. 
It employs Eisenstein congruences on the unitary group ${\rm GU}(3,1)$.
In addition to \eqref{eq:irred}, it requires that $N$ is square-free  
and a ramification hypothesis on $E[p]$.

Our different approach 
dispenses with any of the ramification hypotheses, leading to the following result.

\begin{thm}\label{thm:HPMC}
Let $E$ be an elliptic curve defined over $\Q$ of conductor $N$, $p$ be a prime of good ordinary reduction for $E$, and $K$ an imaginary quadratic field satisfying \eqref{eq:intro-disc}, \eqref{eq:intro-Heeg}, and \eqref{eq:intro-spl}. 
\begin{itemize}
\item[(a)] If $p>3$ satisfies \eqref{eq:irr}, then both $\mathfrak{X}_{\rm ord}(E/K_\infty^-)$  and $\rH^1_{\Fcal_\Lambda}(K,\mathbf{T})$ have  $\Lambda_K^-$-rank one, and 
\[
{\rm ch}_{\Lambda_K^-}\bigl(\X_{\rm ord}(E/K_\infty^-)_{\rm tor}\bigr)={\rm ch}_{\Lambda_K^-}\bigl(\rH^1_{\Fcal_\Lambda}(K,\mathbf{T})/(\boldsymbol{\kappa}_1^{\rm Heeg})\bigr)^2
\]
in $\Lambda_K^-\otimes\Q_p$. 
\item[(b)] If further $p>3$ satisfies \eqref{eq:irred}, then the equality holds in $\Lambda_K^-$ and hence 
Conjecture~\ref{conj:HPMC} holds.
\end{itemize}
\end{thm}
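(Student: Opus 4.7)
The rank-one statements for $\mathfrak{X}_{\rm ord}(E/K_\infty^-)$ and $\rH^1_{\Fcal_\Lambda}(K,\mathbf{T})$ are due to Bertolini and Howard, and Howard further proved the divisibility ``$\supseteq$'' in the claimed equality under \eqref{eq:irred}. So the main task is the opposite divisibility ``$\subseteq$''. The plan, following the strategy announced in the abstract, is to deduce it from Wan's divisibility in a three-variable main conjecture for $E$ over a quartic CM field containing $K$. A preliminary step is to reformulate the Heegner point Main Conjecture, using the anticyclotomic explicit reciprocity law together with Poitou--Tate global duality, in terms of a Bertolini--Darmon--Prasanna type $p$-adic $L$-function $\Lcal_p^{\rm BDP}(E/K)\in\Lambda_K^-$ and a strict anticyclotomic Selmer group. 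It then suffices to prove the corresponding ``$\subseteq$'' divisibility in this BDP-style formulation.

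For the base change, I would choose an auxiliary imaginary quadratic field $K'$, linearly disjoint from $K$, such that $F:=K\cdot K'$ is a quartic CM field in which every prime dividing $Np$ splits completely, and so that the base change of $E$ to $F$ preserves \eqref{eq:irr}. Existence of such $K'$ follows from a standard Chebotarev argument. Over the $\Z_p^3$-tower $F_\infty/F$ (the compositum of the cyclotomic $\Z_p$-extension with the two-variable anticyclotomic $\Z_p^2$-extension of $F$), Wan's theorem from \cite{wan-hilbert} provides the divisibility of the characteristic ideal of the dual ordinary Selmer group over $F_\infty$ by a three-variable $p$-adic $L$-function $\Lcal_p(E/F)$. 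This divisibility is unconditional: although \cite{wan-hilbert} also needs a $p$-integral comparison of automorphic periods in order to deduce a cyclotomic main conjecture over $\Q$, the comparison is not needed for the divisibility itself.

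The final step is to specialize Wan's divisibility along the $\Z_p$-quotient of $\Gal(F_\infty/F)$ corresponding to the anticyclotomic $\Z_p$-extension of $K$. This requires (a) a control theorem identifying the specialization of the $F_\infty$-Selmer group, up to explicit finite error, with the strict anticyclotomic Selmer group of $E/K$, and (b) a factorization expressing the specialization of $\Lcal_p(E/F)$ as the product of $\Lcal_p^{\rm BDP}(E/K)$ with an auxiliary twist factor corresponding to the character cutting out $K'/\Q$. For a suitable choice of $K'$, this auxiliary factor is a unit in $\Lambda_K^-\otimes\Q_p$, and, under \eqref{eq:irred}, an integral unit in $\Lambda_K^-$. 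Combining (a), (b), and Howard's divisibility, and undoing the BDP-reformulation, yields the equality in $\Lambda_K^-\otimes\Q_p$, and in $\Lambda_K^-$ under \eqref{eq:irred}. The hardest part is (b), the $L$-function factorization: the payoff for specializing to the anticyclotomic line of $K$ rather than the cyclotomic line of $\Q$ is that the automorphic periods appear symmetrically and cancel, bypassing the period obstacle that prevents \cite{wan-hilbert} from directly proving Conjecture~\ref{conj:cyc}.
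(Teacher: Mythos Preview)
Your overall architecture is right: reformulate via the BDP $p$-adic $L$-function and the Greenberg Selmer group, base-change to a quartic CM field $M\supset K$, apply Wan's divisibility, and descend. But two of the steps you describe do not work as stated, and the paper's argument is genuinely different there.

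\medskip

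\textbf{The auxiliary factor is not a unit.} After descent to the anticyclotomic line of $K$, the $p$-adic $L$-function side factors as $\CL_p^{\rm BDP}(E/K)\cdot\CL_p^{\rm BDP}(E^{F}/K)$, where $F$ is the real quadratic subfield of your $K\cdot K'$ (equivalently, $E^{F}\simeq E^{K'}$ over $K$). This second factor is a genuine anticyclotomic $p$-adic $L$-function and there is no mechanism for choosing $K'$ so that it becomes a unit in $\Lambda_K^-\otimes\Q_p$, let alone in $\Lambda_K^-$. The paper does \emph{not} try to kill this factor. Instead it observes that $E^F$ also satisfies \eqref{eq:intro-Heeg} and \eqref{irr_K} for the same $K$, so Theorem~\ref{HMC-lm} (the Howard/Kolyvagin-system divisibility, refined as in \cite{eisenstein,eisenstein_cyc}) applies to $E^F$ as well. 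One then has a product divisibility ``$\supset$'' from Wan and a product divisibility ``$\subset$'' from Theorem~\ref{HMC-lm} applied to both $E$ and $E^F$; together these force equality factor by factor. This is the missing idea in your plan.

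\medskip

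\textbf{Splitting conditions and the period issue.} Requiring every prime dividing $Np$ to split completely in $K\cdot K'$ is not the right local input for Wan's theorem: for primes $\ell\mid N$ with $\ell\equiv -1\pmod p$, the paper takes $\ell$ \emph{inert} in the real quadratic subfield $F$, precisely so that $q_v=\ell^2\equiv 1\pmod p$ and hypothesis~(H3) of Hypothesis~\ref{H} is satisfied (see conditions (i)--(vi) in Proposition~\ref{MC-lb-Sel}). With your ``all split'' choice, (H3) would force $\mathcal{V}=\emptyset$, which you do not want to assume. Finally, the integrality is not obtained by a ``symmetric cancellation of periods'' on the anticyclotomic line; rather, one passes from the ordinary to the Greenberg formulation via the Beilinson--Flach zeta elements of \cite{BSTW} (Proposition~\ref{Eq-MC}) and then invokes Hsieh's $\mu=0$ for $\CL_p^{\rm BDP}$ (Proposition~\ref{mu}) to upgrade the divisibility from $\Lambda_K^{\rm ur}\otimes\Q_p$ to $\Lambda_K^{\rm ur}$.
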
 
\begin{rem}
Under the condition \eqref{eq:irr}, the only case excluded by Theorem~\ref{thm:HPMC}(b) is that of (residually) dihedral primes $p$. It will be treated in \cite{bs}.
\end{rem}

Theorem~\ref{thm:HPMC} has applications to the Birch and Swinnerton-Dyer conjecture. For example, Theorem~\ref{thm:HPMC}(a) yields a $p$-converse to the Gross--Zagier and Kolyvagin theorem: 
$$
{\rm corank}_{\Z_p}{\rm Sel}_{p^\infty}(E)=1 \implies \ord_{s=1}L(E,s)=1
$$
 (cf. \cite{pCONVskinner,wan-heegner,castellaheights,CMpconverse, BST-sv, BT22}). Note that the $p$-converse does not assume finiteness of $\sha(E)[p^\infty]$, but in fact deduces it as a consequence. 
 
In light of the $\Lambda_K^-$-adic analogue of the $p$-adic Waldspurger formula of \cite{BDP} (see \cite{cas-hsieh1}),  Conjecture~\ref{conj:HPMC} is equivalent
to the prediction that the $p$-adic $L$-function $\mathcal{L}_p^{\rm BDP}(E/K)\in \Lambda_K^{-,{\rm ur}}$ constructed in \emph{op.\,cit.} generates the characteristic ideal of the anticyclotomic Selmer group  $\mathfrak{X}_{\rm Gr}(E/K_\infty^-)$ 
whose classes are locally trivial (resp. unrestricted) at the primes above $\overline{v}$ (resp. $v$). Here we put 
\[
\Lambda_K^{-,{\rm ur}}=\Lambda_K^-\hat\otimes_{\Z_p}\Z_p^{\rm ur},
\] 
where $\Z_p^{\rm ur}$ denotes the completion of the ring of integers of the maximal unramified extension of $\Q_p$. Hence, Theorem~\ref{thm:HPMC} also yields the following.

\begin{thm}\label{thm:BDP}
Let 
$(E,p,K)$ be as in Theorem~\ref{thm:HPMC}. 
\begin{itemize}
\item[(a)] If $p>3$ satisfies \eqref{eq:irr}, then $\mathfrak{X}_{\rm Gr}(E/K_\infty^-)$ is $\Lambda_K^-$-torsion, and 
\[
{\rm ch}_{\Lambda_K^-}\bigl(\X_{\rm Gr}(E/K_\infty^-)\bigr)=\bigl(\Lcal_p^{\rm BDP}(E/K)\bigr)
\]
in $\Lambda_K^{-,{\rm ur}}\otimes\Q_p$. 
\item[(b)] If further $p>3$ satisfies   \eqref{eq:irred}, then the equality of characteristic ideals holds in $\Lambda_K^{-,{\rm ur}}$. 
\end{itemize}
\end{thm}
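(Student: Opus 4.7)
The plan is to derive Theorem~\ref{thm:BDP} directly from Theorem~\ref{thm:HPMC} via the equivalence, noted in the paragraph preceding the statement, between the Heegner point Main Conjecture and the BDP Main Conjecture. The equivalence is by now standard, and rests on two ingredients that are both already in the literature; the work consists in assembling them and keeping careful track of the coefficient rings.

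The first ingredient is the $\Lambda_K^-$-adic $p$-adic Waldspurger formula of Castella--Hsieh \cite{cas-hsieh1}. Under the $p$-ordinary split hypothesis \eqref{eq:intro-spl}, the Perrin--Riou big logarithm at $v$ furnishes a map $\mathcal{L}_v$ out of the local cohomology of the rank-one ordinary subrepresentation $F^+\mathbf{T}\subset\mathbf{T}|_{G_{K_v}}$ into $\Lambda_K^{-,{\rm ur}}$, and the main formula of \emph{op.\,cit.} asserts
\[
\mathcal{L}_v\bigl(\loc_v(\boldsymbol{\kappa}_1^{\rm Heeg})\bigr)\;=\;u\cdot\mathcal{L}_p^{\rm BDP}(E/K)
\]
for some unit $u\in(\Lambda_K^{-,{\rm ur}})^\times$, where $\loc_v$ is the localization at $v$ followed by projection to $\rH^1(K_v, F^+\mathbf{T})$.

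The second ingredient is a Poitou--Tate global duality comparison of the ordinary Selmer group $\mathfrak{X}_{\rm ord}(E/K_\infty^-)$ and the Greenberg/BDP Selmer group $\mathfrak{X}_{\rm Gr}(E/K_\infty^-)$: their defining local conditions at the primes above $p$ differ, and the nine-term global duality sequence, together with local Tate duality, produces an identity whose local contribution is controlled by $\loc_v(\boldsymbol{\kappa}_1^{\rm Heeg})$. Combining this with the image identification supplied by the first ingredient, one shows that the HPMC-identity
\[
{\rm char}_{\Lambda_K^-}\bigl(\mathfrak{X}_{\rm ord}(E/K_\infty^-)_{\rm tor}\bigr) \;=\; {\rm char}_{\Lambda_K^-}\bigl(\rH^1_{\mathcal{F}_\Lambda}(K,\mathbf{T})/(\boldsymbol{\kappa}_1^{\rm Heeg})\bigr)^2
\]
is equivalent, after extending scalars to $\Lambda_K^{-,{\rm ur}}$, to the BDP-identity
\[
{\rm char}_{\Lambda_K^{-,{\rm ur}}}\bigl(\mathfrak{X}_{\rm Gr}(E/K_\infty^-)\bigr) \;=\; \bigl(\mathcal{L}_p^{\rm BDP}(E/K)\bigr);
\]
the square on the LHS of the HPMC-identity reflects the fact that $\loc_v(\boldsymbol{\kappa}_1^{\rm Heeg})$ enters into both the global and dual-local contributions of the duality sequence.

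To conclude, Theorem~\ref{thm:HPMC} supplies the HPMC-identity in $\Lambda_K^-\otimes\Q_p$ under \eqref{eq:irr}, and in $\Lambda_K^-$ itself under the stronger assumption \eqref{eq:irred}; the equivalence above then yields the corresponding BDP-identity in $\Lambda_K^{-,{\rm ur}}\otimes\Q_p$ (respectively $\Lambda_K^{-,{\rm ur}}$), which is the content of Theorem~\ref{thm:BDP}. I expect no serious obstacle: the main points requiring care are the extension of scalars from $\Lambda_K^-$ to $\Lambda_K^{-,{\rm ur}}$ and the verification that pseudo-null contributions in the duality sequence vanish, which follows from \eqref{eq:irr} via standard results ruling out finite-index $\Lambda$-submodules in the relevant Iwasawa-theoretic Selmer groups.
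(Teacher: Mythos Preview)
Your approach is correct and matches exactly how the paper frames the relationship in the Introduction: the paragraph preceding Theorem~\ref{thm:BDP} says that the equivalence (via \cite{cas-hsieh1}) between Conjecture~\ref{conj:HPMC} and the BDP conjecture means Theorem~\ref{thm:HPMC} yields Theorem~\ref{thm:BDP}. The two ingredients you identify---the $\Lambda_K^-$-adic $p$-adic Waldspurger formula and the Poitou--Tate comparison of the $\Fcal_\Lambda$ and $\Fcal_{\rm Gr}$ Selmer conditions---are precisely those underpinning this equivalence; in the paper these are packaged into \cite[Thm.~5.2]{BCK} (invoked in the proof of Theorem~\ref{HMC-lm}).

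That said, the paper's actual proof in the body runs in the \emph{opposite} order. It proves Theorems~\ref{thm:HPMC} and~\ref{thm:BDP} simultaneously: the ``$\supset$'' divisibility for the BDP conjecture is obtained \emph{directly} by descending the two-variable Greenberg product divisibility of Proposition~\ref{MC-lb-Sel} (coming from Wan's base-change result over the quartic CM field) along $\pi_-:\Gamma_K\twoheadrightarrow\Gamma_K^-$, and this is then combined with the reverse divisibility of Theorem~\ref{HMC-lm}(b) for both $g$ and $g^F$ to conclude equality. So in the body the BDP equality is established first, and HPMC is the consequence---the reverse of your flow. This is only a difference in presentation, not in mathematical content; your argument is a valid proof once Theorem~\ref{thm:HPMC} is in hand.
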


\subsection{Some applications}

\subsubsection{The Birch and Swinnerton-Dyer formula} 

A consequence of Theorems~\ref{thm:cyc} and~\ref{thm:BDP} is the following. 

\begin{cor}\label{cor:BSD}
Let $E/\Q$ be a non-CM elliptic curve. Let $p>3$ be a prime of good ordinary reduction such that \eqref{eq:irr} and \eqref{im} hold. If $\ord_{s=1}L(E,s) = r \in \{0,1\}$, then 
$$
\bigg{|}\frac{L^{(r)}(E,1)}{{\rm Reg}(E)\cdot \Omega_{E}}\bigg{|}_{p}^{-1}=
\bigg{|}\#\sha(E) \prod_{\ell \nmid \infty}c_{\ell}(E)\bigg{|}_{p}^{-1}
$$
and hence the $p$-part of the conjectural BSD formula for $E$ is true.
\end{cor}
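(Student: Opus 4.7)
The plan is to combine the one-variable main conjectures established above with classical Iwasawa-theoretic specialization arguments, following the paradigm of Skinner--Urban and Jetchev--Skinner--Wan. Two preliminary reductions underpin everything: first, \eqref{eq:irr} forces $E(\Q)[p^\infty]=0$, so when $r=0$ the Selmer group $\operatorname{Sel}_{p^\infty}(E/\Q)$ coincides with $\sha(E/\Q)[p^\infty]$; second, \eqref{im} together with \eqref{eq:irr} already implies \eqref{eq:irred}, because a $\sigma$ witnessing \eqref{im} projects mod $p$ to a nontrivial transvection in $\operatorname{SL}_2(\F_p)$, and an irreducible subgroup of $\operatorname{GL}_2(\F_p)$ containing a transvection must contain $\operatorname{SL}_2(\F_p)$ (and then full $\operatorname{GL}_2(\F_p)$, via the cyclotomic determinant). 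Consequently, Theorems~\ref{thm:cyc} and~\ref{thm:BDP} are available with integral equality of characteristic ideals.

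For $r=0$: specialize the equality $\operatorname{ch}_\Lambda(\X_{\rm ord}(E/\Q_\infty))=(\Lcal_p(E/\Q))$ at the augmentation ideal $(\gamma-1)\subset\Lambda$. The Mazur--Swinnerton-Dyer interpolation formula gives
\[
\Lcal_p(E/\Q)(\mathbf{1}) = (1-\alpha_p^{-1})^2\cdot\frac{L(E,1)}{\Omega_E},
\]
with $\alpha_p$ the unit root of $X^2-a_p(E)X+p$. On the algebraic side, Mazur's control theorem, together with the finiteness of $\sha(E/\Q)[p^\infty]$ known by Kato and the triviality of $E(\Q)[p^\infty]$, identifies
\[
\bigl|\X_{\rm ord}(E/\Q_\infty)/(\gamma-1)\bigr|_p^{-1} = \bigl|(1-\alpha_p^{-1})^2\bigr|_p^{-1}\cdot\Bigl|\#\sha(E/\Q)\prod_{\ell}c_\ell(E)\Bigr|_p^{-1},
\]
the Euler factor on the two sides cancelling. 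This yields the $r=0$ case.

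For $r=1$: by the nonvanishing theorem of Friedberg--Hoffstein for quadratic twists, pick an imaginary quadratic field $K$ satisfying \eqref{eq:intro-disc}, \eqref{eq:intro-Heeg}, \eqref{eq:intro-spl} with $L(E^K,1)\neq 0$, chosen (via Chebotarev) so that $E^K/\Q$ still satisfies \eqref{eq:irr} and \eqref{im}. Under the Heegner hypothesis the sign of $L(E/K,s)$ at $s=1$ is $-1$, hence $\ord_{s=1}L(E/K,s)=1$ and $L'(E/K,1)=L'(E,1)\cdot L(E^K,1)\neq 0$. Apply Theorem~\ref{thm:BDP} and specialize at the trivial character: the $p$-adic Waldspurger formula of \cite{BDP} expresses $\Lcal_p^{\rm BDP}(E/K)(\mathbf{1})$ as the square of $\log_{\omega_E}(P_K)$ for a Heegner point $P_K\in E(K)$. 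Combining this with the Gross--Zagier formula (linking $\log_{\omega_E}(P_K)$ to $L'(E/K,1)$), Kolyvagin's theorem (finiteness of $\sha(E/K)$), and a control-theorem analysis of $\X_{\rm Gr}(E/K_\infty^-)$ at the trivial character of $\Lambda_K^-$ that tracks the local terms at $v$ and $\bar v$, one extracts the $p$-part of BSD for $E/K$. The factorization $L(E/K,s)=L(E,s)\,L(E^K,s)$ and the $r=0$ case already established, now applied to $E^K$, isolate the $p$-part of BSD for $E/\Q$.

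The main obstacle is the ``leading-term'' bookkeeping: in rank zero, pinning down the exact Tamagawa contribution emerging from Mazur's control theorem; in rank one, matching the local Euler factors at the primes above $p$ appearing in the BDP formula, controlling the $p$-integrality of $\log_{\omega_E}(P_K)$, and tracking the kernel and cokernel of the control map for $\X_{\rm Gr}$. All of this is by now standard and documented in \cite{pCONVskinner,wan-heegner,castellaheights,BST-sv}, so once Theorems~\ref{thm:cyc} and~\ref{thm:BDP} are in hand the corollary follows by assembling these known ingredients.
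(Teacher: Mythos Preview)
Your proposal is correct and follows essentially the same route as the paper: for $r=0$ specialize Theorem~\ref{thm:cyc} at the trivial character (the paper cites \cite[Thm.~4.1]{greenberg-cetraro} for the control computation you sketch), and for $r=1$ choose $K$ as you do, specialize Theorem~\ref{thm:BDP} via the BDP formula together with \cite[Thm.~3.3.1]{jsw}, then peel off the $E^K$-factor using the rank-zero case. Your explicit observation that \eqref{eq:irr} and \eqref{im} force \eqref{eq:irred} is a useful addition, since it explains why the integral form of Theorem~\ref{thm:BDP} is available---the paper leaves this implicit.
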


\begin{proof}
In the case $r=0$, this follows from the equality of characteristic ideals in Theorem~\ref{thm:cyc}, the interpolation property of $\mathcal{L}_p(E/\Q)$ at the trivial character, and the formula (up to a $p$-adic unit) in  the control theorem \cite[Thm.\,4.1]{greenberg-cetraro} for the value of a characteristic power series for $\mathfrak{X}_{\rm ord}(E/\Q_\infty)$ at the trivial character.

Similarly, for a suitably chosen imaginary quadratic field $K$, the result in the case $r=1$ follows from Theorem~\ref{thm:BDP}, the $p$-adic Waldspurger formula \cite{BDP} for the value of $\mathcal{L}_p^{\rm BDP}(E/K)$ at the trivial character, the anticyclotomic control theorem \cite[Thm.\,3.3.1]{jsw}, and the $r=0$ result for the $K$-quadratic twist of $E$. (See also \cite[\S{1}]{ICTS} for a more detailed review of these arguments.)
\end{proof}

\begin{rem}
The condition \eqref{im} in Corollary~\ref{cor:BSD} excludes only finitely many primes $p$ (cf. Remark~\ref{rem:cyc}(i)). For an overview of prior results, the reader may refer to \cite{BST-sv,BSTW}. In contrast to them, the above result does not impose any condition on the conductor of $E$. 
\end{rem}

\subsubsection{Kolyvagin's conjecture} 
Theorems~\ref{thm:cyc} and~\ref{thm:BDP} 
are one of the key ingredients\footnote{A similar input when \eqref{eq:irr} is not satisfied is provided by the main results of \cite{eisenstein_cyc}.} in the proof of Kolyvagin's conjecture and its analogue for Kato's Euler system in the joint work \cite{BCGS} of the authors with Grossi. These conjectures assert a non-triviality of the associated Kolyvagin systems. 
When $p>3$ satisfies \eqref{eq:irred}, Theorems~\ref{thm:cyc} and~\ref{thm:BDP} are also used in \cite{BCGS} to prove the refinement of Kolyvagin's conjecture and its cyclotomic analog formulated by W.\,Zhang \cite{zhang-CDM} and C.-H.\,Kim \cite{kim}, respectively.

\subsection{On the two-variable Main Conjectures} 

Our approach to the above theorems also gives a proof of the two-variable Iwasawa Main Conjectures for $E/K$ under an additional hypothesis on $E[p]$. 

For the precise statement, following terminology 
in \cite{diamond-FLT}, consider the set of ``vexing primes'' for $E[p]$:
\[
\mathcal{V}:=\bigl\{\ell\equiv -1\,({\rm mod}\,p)\;\,\vert\;\,\textrm{ $\bar{\rho}_E\vert_{G_{\Q_\ell}}$ is irreducible and $\bar{\rho}_E\vert_{I_\ell}$ is reducible}\bigr\},
\]
where $I_\ell\subset G_{\Q_\ell}$ are inertia and decomposition groups at $\ell$, respectively. 

Let $K_\infty/K$ denote the $\Z_p^2$-extension of $K$, and put $\Gamma_K={\rm Gal}(K_\infty/K)$ and $\Lambda_K=\Z_p[\![\Gamma_K]\!]$. Let $\mathfrak{X}_{\rm ord}(E/K_\infty)$ be the Pontryagin dual of the Selmer group ${\rm Sel}_{p^\infty}(E/K_\infty)$, and let $\Lcal_p^{\rm PR}(E/K)\in\Lambda_K$ be the two-variable $p$-adic Rankin $L$-series constructed by Perrin-Riou \cite{PR-JLMS} (normalized as in \cite[\S{1.2}]{eisenstein_cyc}).

\begin{thm}\label{thm:2var-IMC}
Let $(E,p,K)$ be as in Theorem~\ref{thm:HPMC}. Assume that $\mathcal{V}=\emptyset$. 
\begin{itemize}
\item[(a)] If $p>3$ satisfies \eqref{eq:irr}, then  $\mathfrak{X}_{\rm ord}(E/K_\infty)$ is $\Lambda_K$-torsion, with
\[
{\rm ch}_{\Lambda_K}\bigl(\mathfrak{X}_{\rm ord}(E/K_\infty)\bigr)=\bigl(\Lcal_p^{\rm PR}(E/K)\bigr)
\]
in $\Lambda_K\otimes\Q_p$. 
\item[(b)] If further $p>3$ satisfies \eqref{eq:irred}, then the equality of characteristic ideals holds in $\Lambda_K$.
\end{itemize}
\end{thm}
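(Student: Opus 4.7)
The plan is to obtain the two-variable Main Conjecture by combining an Euler system upper bound from the Beilinson--Flach classes with an Eisenstein-congruence lower bound obtained by descending Wan's three-variable divisibility over the quartic CM field $\mathcal{F}\supset K$ already used in proving the earlier theorems of this paper. Roughly, Beilinson--Flach handles the "easy" divisibility $\mathrm{ch}_{\Lambda_K}(\mathfrak{X}_{\rm ord}(E/K_\infty)) \supseteq (\Lcal_p^{\rm PR}(E/K))$, while restriction of scalars from $\mathcal{F}$ to $K$ converts Wan's divisibility into the opposite one.

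For the upper bound, I would invoke the Beilinson--Flach Euler system machinery of Lei--Loeffler--Zerbes and Kings--Loeffler--Zerbes, together with the explicit reciprocity law relating the bottom Beilinson--Flach class to $\Lcal_p^{\rm PR}(E/K)$. Under \eqref{eq:irred} this yields a sharp bound in $\Lambda_K$, and under the weaker \eqref{eq:irr} the analogous bound in $\Lambda_K\otimes\Q_p$. The assumption $\mathcal{V}=\emptyset$ enters precisely here: it ensures that the local conditions at primes $\ell\Vert N$ are identified correctly with those naturally cut out by the Beilinson--Flach classes, so that the Euler system bound does not lose a spurious local factor; this is the only step in the whole argument that appears to genuinely require the absence of vexing primes.

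For the opposite divisibility, I would run the same descent from Wan's three-variable divisibility over $\mathcal{F}$ that underlies the proofs of Theorems~\ref{thm:cyc}, \ref{thm:HPMC}, and~\ref{thm:BDP} in this paper. Restriction of scalars from $\mathcal{F}$ to $K$ should identify, up to controllable local contributions at the primes of $\mathcal{F}$ above $p$, an appropriate two-variable quotient of Wan's Selmer module with $\mathfrak{X}_{\rm ord}(E/K_\infty)$, while the corresponding specialization of Wan's three-variable $p$-adic $L$-function matches $\Lcal_p^{\rm PR}(E/K)$ by Artin formalism for induced representations. Wan's divisibility then descends to the desired bound, integrally under the additional \eqref{eq:irred}.

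The main obstacle is the bookkeeping in this descent: one must precisely match the algebraic and analytic local factors that arise when restricting scalars from $\mathcal{F}$ to $K$, especially at the primes of $\mathcal{F}$ above $p$ (where the Panchishkin/ordinary local conditions over $\mathcal{F}$ do not literally pull back from those over $K$) and at the primes dividing $N$ (where verifying that the Artin formalism lifts $p$-adically to a clean identity of interpolation factors is delicate). The hypotheses \eqref{eq:intro-Heeg}, \eqref{eq:intro-spl}, and $\mathcal{V}=\emptyset$ imposed in the statement are tuned to make this matching come out cleanly on the nose; relaxing any of them would force tracking extra local terms on both sides of the main conjecture and recovering the equality in $\Lambda_K\otimes\Q_p$ from a potentially non-sharp descent.
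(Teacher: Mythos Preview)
There are two genuine gaps in your plan.

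First, the descent from the quartic CM field $M=FK$ does not produce a divisibility for $E/K$ alone. By Shapiro's lemma (Lemma~\ref{Sel-fac}), the $\Lambda_K$-specialization of $\mathfrak{X}_{\rm ord}(g_F/M_\infty)$ is $\mathfrak{X}_{\rm ord}(g/K_\infty)\oplus\mathfrak{X}_{\rm ord}(g^F/K_\infty)$, and by Lemma~\ref{pL-fac} the $p$-adic $L$-function factors correspondingly. So the descent yields only the \emph{product} divisibility of Proposition~\ref{MC-lb-Sel}; the individual divisibility for $g/K$ cannot be read off from it.

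Second, the Beilinson--Flach Euler system divisibility you invoke for the other direction is not currently available for $E/K$ without an auxiliary twist. As the remark after \eqref{eq:product-eq} notes, the constructions of \cite{LLZ-K} require twisting by a non-Eisenstein, $p$-distinguished Hecke character, and the two-variable zeta elements of \cite{BSTW} are used in this paper only to prove the \emph{equivalence} of Conjectures~\ref{conj:2-var-ord-IMC} and~\ref{conj:2-var-Gr-IMC} (Proposition~\ref{Eq-MC}), not a one-sided bound. That same remark observes that if such an Euler system divisibility were in hand, the theorem would follow from \eqref{eq:product-eq} with \emph{no} hypothesis on $\mathcal V$; so your diagnosis of where $\mathcal V=\emptyset$ enters is also incorrect.

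The paper circumvents both problems by a cancellation trick. Combining the product divisibility with the anticyclotomic equalities already proved in Theorem~\ref{thm:BDP} (for $g$ and for its twist) upgrades it to a two-variable product \emph{equality}, and this is done for each of two auxiliary fields $F,F'$. One then runs the same argument once more with the third real quadratic field $F_0\subset FF'$; since $(g^F)^{F_0}=g^{F'}$, this yields the product equality for the pair $(g^F,g^{F'})$, and dividing it into the product of the first two isolates the equality for $g/K$. The hypothesis $\mathcal V=\emptyset$ enters only at this last step: the field $F_0$ is determined by $F$ and $F'$ and so cannot be forced to satisfy condition~(iii) of Proposition~\ref{MC-lb-Sel}, which is what encodes (H3) of Hypothesis~\ref{H}; when $\mathcal V=\emptyset$ that condition becomes vacuous and the argument goes through.
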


\begin{rem} 
Since the global root number of $E$ over $K$ equals $-1$ (cf.~\eqref{eq:intro-Heeg}), Theorem~\ref{thm:2var-IMC} complements the results on the two-variable Iwasawa Main Conjecture in  \cite{skinner-urban}. On the other hand, \cite[Thm.~A]{cas-wan} established some cases of the two-variable Main Conjecture for semistable elliptic curves $E$ 
when $\varepsilon(E/K)=-1$. However, the latter results exclude the situation of the classical Heegner hypothesis and assumes a ramification hypothesis on $E[p]$. 
\end{rem}

\subsection{About the proofs}

The key new idea is to base change $E$ to a quartic CM field $M$ containing $K$ for which the main result of \cite{wan-hilbert} towards a three-variable Main Conjecture applies, and utilize the two-variable zeta element associated to $E$ over $K$ recently constructed in \cite{BSTW}. 

To begin, the main result of \cite{wan-hilbert} yields the divisibility 
\begin{equation}\label{eq:PR-div}
\bigl(\CL_p^{\rm PR}(E/K)\cdot\CL_p^{\rm PR}(E^F/K)\bigr)\supset
{\rm ch}_{\Lambda_K}\bigl(\mathfrak{X}_{\rm ord}(E/K_\infty)\bigr)\cdot{\rm ch}_{\Lambda_K}\bigl(\mathfrak{X}_{\rm ord}(E^F/K_\infty)\bigr)
\end{equation}
in $\Lambda_K\otimes\Q_p$, where $E^F$ is the quadratic twist of $E$ for the real subfield $F$ contained in $M$. In view of the two-variable zeta elements of \cite{BSTW} and their explicit reciprocity laws, this translates into the divisibility 
\begin{equation}\label{eq:Gr-div}
\bigl(\CL_p^{\rm Gr}(E/K)\cdot\CL_p^{\rm Gr}(E^F/K)\bigr)\supset
{\rm ch}_{\Lambda_K}\bigl(\mathfrak{X}_{\rm Gr}(E/K_\infty)\bigr)\cdot{\rm ch}_{\Lambda_K}\bigl(\mathfrak{X}_{\rm Gr}(E^F/K_\infty)\bigr)
\end{equation}
in $\Lambda_K^{\rm ur}\otimes\Q_p$, where $\mathcal{L}_p^{\rm Gr}(E^\cdot/K)\in\Lambda_K^{\rm ur}:=\Lambda_K\hat\otimes_{\Z_p}\Z_p^{\rm ur}$ is a two-variable $p$-adic Rankin $L$-series 
specializing to $\Lcal_p^{\rm BDP}(E/K)$ (up to a unit) under the natural projection
$\Lambda_K^{\rm ur}\rightarrow\Lambda_K^{-,{\rm ur}}$, and $\mathfrak{X}_{\rm Gr}(E/K_\infty)$ is the counterpart of $\mathfrak{X}_{\rm Gr}(E/K_\infty^-)$ over $K_\infty/K$. 

In view of the vanishing of the Iwasawa $\mu$-invariant of $\Lcal_p^{\rm BDP}(E/K)$ proved in \cite{hsieh-special,Bu-mu} following ideas in  \cite{hidamu}, the divisibilities \eqref{eq:PR-div} and \eqref{eq:Gr-div} both hold  integrally. 

The proof of Theorem~\ref{thm:cyc} then follows from \eqref{eq:PR-div} (for a suitably chosen $K$) by descending to the cyclotomic $\Z_p$-extension $K_\infty^+/K$ and appealing to Kato's work \cite{kato-euler-systems}. Similarly, the proof of Theorem~\ref{thm:BDP} (and hence of Theorem~\ref{thm:HPMC}) follows from \eqref{eq:Gr-div} by descending to 
$K_\infty^-/K$ and appealing to the Kolyvagin system bound developed in  \cite{eisenstein,eisenstein_cyc} applied to the Heegner point Euler system. 
Without any restriction on $\mathcal{V}$, we thus arrive at the equality 
\begin{equation}\label{eq:product-eq}
\bigl(\CL_p^{\rm PR}(E/K)\cdot\CL_p^{\rm PR}(E^F/K)\bigr)=
{\rm ch}_{\Lambda_K}\bigl(\mathfrak{X}_{\rm ord}(E/K_\infty)\bigr)\cdot{\rm ch}_{\Lambda_K}\bigl(\mathfrak{X}_{\rm ord}(E^F/K_\infty)\bigr)
\end{equation}
and likewise for \eqref{eq:Gr-div}.  Then assuming $\mathcal{V}=\emptyset$ we separate
the two factors, concluding the proof of Theorem~\ref{thm:2var-IMC}. 

The above strategy also applies for the prime $p=3$. The only missing ingredient is the divisibility \eqref{eq:PR-div} established in~\cite{wan-hilbert}, which assumes $p>3$ only for a comparison of certain automorphic periods associated to Hilbert modular forms (cf.~\cite[Thm.~86]{wan-hilbert}).

\begin{rem}
An Euler system for $E/K$ extending\footnote{The construction of such an Euler system will complement the one in \cite{LLZ-K}, where the authors need to twist by a non-Eisenstein and $p$-distinguished Hecke character.} the construction in \cite{BSTW} would give rise to a divisibility
\begin{equation}\label{eq:zeta-div}
{\rm ch}_{\Lambda_K}\bigl(\mathfrak{X}_{\rm ord}(E/K_\infty)\bigr)\supset\bigl(\Lcal_p^{\rm PR}(E/K)\bigr),\nonumber
\end{equation}
possibly after inverting $p$. With this divisibility in hand, the proof of  Theorem~\ref{thm:2var-IMC} would follow from \eqref{eq:product-eq} without the additional hypothesis that $\mathcal{V}=\emptyset$.
A construction of this Euler system will appear in the work of the first-named author with Marco Sangiovanni Vincentelli. 
\end{rem}



\subsection{Acknowledgements}
The authors thank Giada Grossi, Haruzo Hida, Marco Sangiovanni Vincentelli, Ye Tian and Xin Wan for helpful communications. They are grateful to the referees for 
their detailed and useful comments. During the preparation of this paper, A.B. was partially supported by the NSF grants DMS-2303864 and DMS-2302064; F.C. was partially supported by the NSF grants DMS-2101458 and DMS-2401321; C.S. was partially supported by the Simons Investigator Grant \#376203 from the Simons Foundation and by the NSF grant DMS-1901985. 



\section{Main Conjecture over CM fields} 


In this section we briefly recall the formulation of the Iwasawa Main Conjecture over CM fields $M/F$ at the base of the proof of our main results. 
\subsection{Setting}
Let $F$ be a totally real field of degree $d=[F:\Q]$. 

Let $g\in S_{2}(\Gamma_{0}(\fn))$ be a Hilbert modular newform over $F$ of parallel weight $2$. Let $p$ be a prime with 
\begin{equation}\label{ur}\tag{ur}
\text{$p\nmid D_F$,}
\end{equation}
where $D_F$ denotes the discriminant of $F/\Q$. For a prime $\lambda$ of the Hecke field $F_g$ over $p$, let $\rho_{g}:G_{F} \rightarrow{\rm GL}_{2}(F_{g,\lambda})$ be the associated Galois representation and $V_{g}:=V_{g,\lambda}$ the underlying $F_{g,\lambda}$-vector space. Let $T_{g}\subset V_{g}$ be a $G_F$-stable $\cO:=\cO_{F_{g,\lambda}}$-lattice and,
\[
\bar{\rho}_{g}:G_{F}\rightarrow{\rm GL}_{2}(\bar{\mathbb{F}}_{p})
\]
the corresponding residual representation. Suppose that $g$ is ordinary at each prime $w$ of $F$ over $p$, which we abbreviate as $p$ being a prime of ordinary reduction for $g$.  
Let $0 \subset {\rm Fil}_{w}^{+}(V_{g}) \subset V_g$ be the associated filtration of $F_{g,\lambda}[G_{F_{w}}]$-modules and put ${\rm Fil}_{w}^{+}(T_{g})=T_g\cap{\rm Fil}_{w}^{+}(V_{g})$. 


Let $M$ be a CM quadratic extension of $F$ such that 
\begin{equation}\label{spl-I}\tag{spl$_{F}$}
\text{any prime of $F$ above $p$ splits in $M$.}
\end{equation}
%
%
Denote by $\Gamma_{M}^-$ (resp.~$\Gamma_{M}^+$) the Galois group of 
the anticyclotomic $\Z_p^{d}$-extension $M_\infty/M$ (resp. cyclotomic $\Z_p$-extension $M_\infty^+/M$). Let $M_{\infty}=M_{\infty}^-M_{\infty}^+$ be the compositum, and put $\Gamma_{M}=\Gal(M_{\infty}/M)$ and $\Lambda_{M}=\cO[\![\Gamma_{M}]\!]$. 

\subsubsection*{Selmer groups}\label{ss:Sel-I}
%
%
We consider the $\cO[G_M]$-module 
\[
\mathcal{M}_{g}=T_g\otimes_{\Z_{p}} \Lambda_{M}^{\vee},
\]
where $\Lambda_M^\vee={\rm Hom}_{\rm cts}(\Lambda_M,\Q_p/\Z_p)$ denotes the Pontryagin dual, 
and 
 $G_{M}$ acts on 
$\Lambda_{M}$ and $\Lambda_{M}^{\vee}$ via 
$\Psi: G_{M}\twoheadrightarrow \Gamma_{M} \hookrightarrow \Lambda_{M}^{\times}$ and $\Psi^{-1}$, respectively. For every prime $w$ of $M$ above $p$, 
put $\mathcal{M}_{g,w}^{+}={\rm{Fil}}_{w}^{+}(T_g) \otimes_{\Z_{p}}\Lambda_{M}^{\vee}$. Let $\Sigma$ be a finite set of places of $M$ containing the primes above $\fn p\infty$, let $M^\Sigma$ be the maximal extension of $M$ unramified outside $\Sigma$, and define the \emph{ordinary Selmer group} by  
\[
\rH^{1}_{\CF_{\Lambda}}(M,\mathcal{M}_{g}):=\ker\bigg{\{}\rH^{1}(M^\Sigma/M,\mathcal{M}_{g}) \ra 
\prod_{w \in \Sigma, w\nmid p}\rH^{1}(M_{w},\mathcal{M}_{g}) \times \prod_{w|p}\rH^{1}(I_{w},\mathcal{M}_{g}/\mathcal{M}_{g,w}^{+})\bigg{\}}.
\]
We put $\mathfrak{X}_{\rm ord}(g/M_\infty)=\rH^1_{\CF_{\Lambda}}(M,\mathcal{M}_g)^\vee$ to denote the Pontryagin dual, and for any subextension $M'$ of $M_\infty/M$ let $\mathfrak{X}_{\rm ord}(g/M')$ be the analogously defined Selmer group with ${\rm Gal}(M'/M)$ in place of $\Gamma_M$.

We shall also consider the \emph{Greenberg Selmer group}
\[
\rH^{1}_{\CF_{\rm Gr}}(M,\mathcal{M}_{g}):=\ker\bigg{\{}\rH^{1}(M^\Sigma/M,\mathcal{M}_{g}) \ra 
\prod_{w \in \Sigma, w\nmid p}\rH^{1}(M_{w},\mathcal{M}_{g}) \times\prod_{w\mid\overline{v}}\rH^{1}(I_{w},\mathcal{M}_{g})\bigg{\}}
\]
and its Pontryagin dual $\mathfrak{X}_{\rm Gr}(g/M_\infty)$, as well as their variants for any $M'$ as above.


\subsubsection*{$p$-adic $L$-functions} 

Assume that the prime $p$ is odd and unramified in $F$. Let 
$M/F$ be a CM quadratic field extension satisfying \eqref{spl-I} and such that 
\begin{equation}\label{exc-I}\tag{$\Delta$}
\text{$M$ is not contained in $H_{F}$, and any prime ramified in $F/\Q$ splits in $M$.}
\end{equation}
Here $H_F$ denotes the Hilbert class field of $F$.
Let 
\[
\CL_p^{}(g/M) \in \Lambda_{M} \otimes\Q_p
\]
be the associated $(d+1)$-variable $p$-adic $L$-function as in \cite[\S7.3]{wan-hilbert}, 
%
which interpolates the central $L$-values $L^{\rm alg}(g/M \otimes \chi, 1)$ 
as $\chi$ varies over finite order characters of $\Gamma_M$ (cf.~\cite[Thm.~82(i)]{wan-hilbert}).
If an underlying Hecke algebra is Gorenstein, then \cite[Thm.~82(ii)]{wan-hilbert}  shows the inclusion $\CL_p(g/M) \in \Lambda_{M}$.

\subsection{Iwasawa Main Conjecture} 

\begin{conj}\label{g-var-IMC}
Let $g \in S_{2}(\Gamma_{0}(\fn))$ be a Hilbert modular newform  over a totally real field $F$ and $p$ an odd prime unramified in $F$ and good ordinary for $g$. 
Let $M/F$ be a CM quadratic extension satisfying \eqref{spl-I}
and such that
\begin{equation}\label{irr_M}\tag{irr$_M$}
\text{$\bar{\rho}_{g}$ is irreducible as $G_M$-representation.}
\end{equation}
Then $\mathfrak{X}_{\rm ord}(g/M_\infty)$ is $\Lambda_M$-torsion, with
\[
{\rm ch}_{\Lambda_M}\bigl(\mathfrak{X}_{\rm ord}(g/M_\infty)\bigr)=\bigl(\mathcal{L}_p(g/M)\bigr).
\]
\end{conj}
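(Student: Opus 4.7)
The plan is to establish the equality of characteristic ideals by combining, in the classical two-sided fashion, an Eisenstein-congruence divisibility with an Euler-system divisibility in the opposite direction. Under the hypotheses of Conjecture \ref{g-var-IMC}, the first of these is already available from \cite{wan-hilbert}; the real work would be to supply the second.

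First, I would import the main divisibility of \cite{wan-hilbert}, obtained via Eisenstein congruences on the unitary group ${\rm GU}(2,2)_F$:
\[
{\rm ch}_{\Lambda_M}\bigl(\mathfrak{X}_{\rm ord}(g/M_\infty)\bigr)\;\subseteq\;\bigl(\mathcal{L}_p(g/M)\bigr)
\]
in $\Lambda_M \otimes \Q_p$. The irreducibility hypothesis \eqref{irr_M} is what prevents Eisenstein-type cohomology classes from contaminating $\mathfrak{X}_{\rm ord}(g/M_\infty)$ and interfering with the argument. Under the Gorenstein condition on the relevant Hecke algebra recorded in \cite[Thm.~82(ii)]{wan-hilbert}, the inclusion is integral in $\Lambda_M$; otherwise one has to tolerate a bounded denominator at $p$.

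Second, I would supply
\[
\bigl(\mathcal{L}_p(g/M)\bigr)\;\subseteq\;{\rm ch}_{\Lambda_M}\bigl(\mathfrak{X}_{\rm ord}(g/M_\infty)\bigr)
\]
by producing a $(d+1)$-variable Euler system for $V_g \otimes \Psi$ along $M_\infty/M$, whose bottom class is matched to $\mathcal{L}_p(g/M)$ through a Perrin-Riou-style big logarithm. Rubin's Kolyvagin-system formalism, in the refined form used in \cite{eisenstein,eisenstein_cyc}, would then convert this Euler system into the desired bound on $\mathfrak{X}_{\rm ord}(g/M_\infty)$. The vanishing of the anticyclotomic $\mu$-invariant of $\mathcal{L}_p(g/M)$, following \cite{hsieh-special,Bu-mu}, would let one conclude integrally rather than only modulo powers of $p$.

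The main obstacle is step two: no Euler system is currently available that interpolates the Rankin--Selberg values $L(g/M\otimes\chi,1)$ as $\chi$ varies over all finite-order characters of $\Gamma_M$, beyond the cyclotomic direction handled by \cite{kato-euler-systems} and the partial two-variable constructions of \cite{BSTW,LLZ-K}. This is precisely why the body of the paper sidesteps Conjecture \ref{g-var-IMC} and instead exploits Wan's divisibility in its natural product form \eqref{eq:PR-div} for the pair $(E,E^F)$: paired with the existing zeta-element inputs over $\Q$ and $K$, that product form alone is enough to separate the two factors and conclude the main theorems without having to construct the missing Euler system over $M$.
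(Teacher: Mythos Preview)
The statement you are asked to prove is a \emph{conjecture}, and the paper does not supply a proof of it; it is stated as an open problem and then specialised (in the case $F=\Q$) to Conjecture~\ref{conj:2-var-ord-IMC}, which is what the paper actually works towards. Your proposal correctly identifies this: you sketch the standard two-divisibility strategy, import the Eisenstein-congruence side from \cite{wan-hilbert}, and then acknowledge that the Euler-system side over a general CM field $M$ is not currently available. Your final paragraph accurately summarises how the paper circumvents the missing ingredient by descending to $K$ and using the product form of Wan's divisibility together with the existing one-variable inputs (Kato over $\Q$, Heegner points over $K$).

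So there is no discrepancy to report: the paper has no proof of Conjecture~\ref{g-var-IMC} to compare against, and your diagnosis of the obstruction is correct. One small clarification: the divisibility you cite from \cite{wan-hilbert} requires, beyond \eqref{irr_M}, the auxiliary Hypothesis~\ref{H} and the sign/ramification conditions on $\fn^-$ recorded in Theorem~\ref{3-var-IMC}; these are not part of the hypotheses of Conjecture~\ref{g-var-IMC} itself, so even the first divisibility is not known in the generality of the conjecture.
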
 

\begin{rem} 
Without conditions \eqref{ur}, and \eqref{irr_M}, the conjecture is still expected to hold, with the equality of characteristic ideals  being possibly up to tensoring with $\Q_p$.
\end{rem}

\section{Main Conjectures over quartic CM fields} 

We describe a consequence of a result \cite{wan-hilbert} towards Conjecture \ref{g-var-IMC} which will be central to the proofs of main results. 


\subsection{A hypothesis} The results of \cite{wan-hilbert} are conditional on the following hypothesis. 
\begin{hyp}\label{H}
\noindent\
\begin{itemize}
\item[(H1)] $\bar{\rho}_g|_{G_{F(\zeta_{p})}}$ is absolutely irreducible, and for $p=5$ the following case is excluded: the projective image $\bar{G}$ of $\bar{\rho}_g\vert_{G_F}$ is isomorphic to ${\rm PGL}_{2}(\mathbb{F}_p)$ and the mod $p$ cyclotomic character factors through $G_{F}\ra \bar{G}^{\rm ab}\simeq \Z/2\Z$ (in particular $[F(\zeta_{5}):F]=2$).
\item[(H2)] There is a minimal modular lifting of $\bar{\rho}_g\vert_{G_F}$ (cf.~\cite[Def.~6.11]{Fu}).
\item[(H3)] For any finite place $v$ of $F$, if $\bar{\rho}_{g}\vert_{G_{F_{v}}}$ is absolutely irreducible and $\bar{\rho}_{g}\vert_{I_{{v}}}$ is absolutely reducible, then $q_{v}\not\equiv -1 \pmod{p}$. 
\end{itemize}
\noindent Here $I_{v}\subset G_{F_v}$ are inertia and decomposition groups at $v$, respectively, and $q_v$ denotes the size of the residue field of $v$.
\end{hyp}

\begin{rem}\label{rem-H}\noindent\
\begin{itemize}
\item[(i)] The hypothesis (H1) implies that a certain Hecke algebra is Gorenstein (cf.~\cite[Thm.~11.1]{Fu}), and so  $\CL_p(g/M)\in \Lambda_M$. (Note that here $\bar{\rho}_{g}$ is automatically $p$-distinguished in the sense of \emph{loc.\,cit.})
\item[(ii)] Under (H3), the exceptional case $0_E$ in \cite[p.~16]{Fu} does not occur, and hence the results of \cite{Fu} apply to the setting of \cite[\S7--9]{wan-hilbert} (cf.~\cite[p.~57]{Fu}).
\item[(iii)] The case excluded by (H1) does not occur for $g$ corresponding to elliptic curves (cf.~\cite[Prop.~9.8]{Fu}). 
\end{itemize}
\end{rem} 
\subsection{An Eisenstein congruence divisibility}
\begin{thm}[Wan]\label{3-var-IMC}
Let $g\in S_{2}(\Gamma_{0}(N))$ be an elliptic newform, and let $p>3$ be a prime of good ordinary reduction for $g$. 
Let $F$ be a real quadratic field with $(pN,D_F)=1$, and  let $g_F$ denote the base-change of $g$ to $F$. 
Let $M/F$ be a CM quadratic extension satisfying \eqref{spl-I}, \eqref{exc-I}, and $(N\cO_{F},D_{M/F})=(1)$. Write 
\[
N\cO_F = \fn^{+}\fn^{-},
\]
with $\fn^{+}$ (resp. $\fn^-$) divisible only by primes which are split (resp. inert) in $M/F$. Suppose that: 
\begin{itemize}
\item[(i)] $\bar{\rho}_{g_F}=\bar{\rho}_g\vert_{G_F}$ satisfies \eqref{irr_M}. 
\item[(ii)] Hypothesis \ref{H} holds. 
\item[(iii)] $\fn^-$ is the square-free product of an even number of primes. 
\item[(iv)] $\bar{\rho}_{g_F}$ is ramified at every prime dividing $\fn^-$.
\end{itemize}
Then we have the divisibility
\[
\bigl(\CL_p(g_F/M)\bigr) \supset {\rm ch}_{\Lambda_M}\bigl(\mathfrak{X}_{\rm ord}(g/M_\infty)\bigr)
\]
in $\Lambda_M$.
\end{thm}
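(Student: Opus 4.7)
My plan is to deduce the statement directly from Wan's main theorem in \cite{wan-hilbert} on the Iwasawa Main Conjecture for Hilbert modular forms over CM fields, which is established via Eisenstein congruences on the unitary group $\mathrm{GU}(3,1)$ associated with $M/F$. The work is to check that the hypotheses (i)--(iv) above translate into those needed for the input theorem of \emph{op.\,cit.}, and then to identify the divisibility obtained there with a divisibility between $\mathcal{L}_p(g_F/M)$ and the characteristic ideal of $\mathfrak{X}_{\rm ord}(g/M_\infty)$ as defined in Section~2.

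The core construction in \cite{wan-hilbert} is a $\Lambda_M$-adic Klingen--Eisenstein family $\mathbf{E}$ on $\mathrm{GU}(3,1)/F$ interpolating classical Eisenstein series built from $g_F$ paired with Hecke characters of $M$ of infinity type $(1,0)$. Its constant term is essentially $\mathcal{L}_p(g_F/M)$, while its non-constant Fourier--Jacobi coefficients lie in $\Lambda_M$ and are coprime to the constant term. Eisenstein congruences between $\mathbf{E}$ and cuspidal Hida families on $\mathrm{GU}(3,1)$, combined with the Ribet--Wiles--Urban lattice construction applied to the associated Galois pseudo-representations, then produce Selmer classes in $\rH^1_{\CF_\Lambda}(M,\mathcal{M}_g)$ annihilated by $\mathcal{L}_p(g_F/M)$, which yields the stated upper bound on $\mathrm{char}_{\Lambda_M}\bigl(\mathfrak{X}_{\rm ord}(g/M_\infty)\bigr)$.

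In this dictionary, hypothesis (i) together with Hypothesis~\ref{H} are the standard inputs ensuring that the residually reducible pseudo-representation obtained from the congruence lifts uniquely to a genuine Galois representation, and that the resulting Selmer classes are non-trivial; the parity condition (iii) on $\fn^-$ matches the global root number needed for a Klingen--Eisenstein section with the correct local data to exist on the relevant inner form, and the ramification condition (iv) determines the local level structure at inert primes dividing $\fn^-$ (so that minimal deformation conditions apply); finally, \eqref{spl-I} and \eqref{exc-I} control the primes above $p$ and the ramified primes of $F/\Q$, and together with $(pN,D_F)=1$ place us in the unramified/tame regime of \cite{wan-hilbert}.

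The main obstacle --- the explicit computation of Fourier--Jacobi coefficients and constant terms of the Klingen--Eisenstein family, together with the coprimality of the non-constant part with $\mathcal{L}_p(g_F/M)$ --- constitutes the technical heart of \cite{wan-hilbert}, and I would invoke it as a black box. The remaining work is thus bookkeeping: matching conventions (so that the $p$-adic $L$-function built in \emph{op.\,cit.} via the doubling method and Shimizu's pullback formula agrees with $\mathcal{L}_p(g_F/M)$ of Section~2, and so that the Selmer condition of \emph{op.\,cit.} coincides with our $\CF_\Lambda$), and verifying that the residual big-image and distinguishedness conditions recorded as Hypothesis~\ref{H} cover the cases in the statement.
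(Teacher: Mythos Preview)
Your high-level strategy---invoke Wan's main theorem in \cite{wan-hilbert} as a black box after checking that the hypotheses line up---matches the paper's. Two corrections are in order.

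First, a factual slip: the Eisenstein congruences of \cite{wan-hilbert} are on the quasi-split group $\mathrm{GU}(2,2)$ over CM fields, not $\mathrm{GU}(3,1)$; the latter is the setting of the separate paper \cite{wan-GU31}, aimed at the anticyclotomic main conjecture over imaginary quadratic fields. Your sketch of the mechanism---Fourier--Jacobi coefficients, doubling via Shimizu's pullback---belongs to the $\mathrm{GU}(3,1)$ story rather than the one actually being cited here. This does not affect the logic of your argument (you treat the result as a black box anyway), but the surrounding exposition should be corrected.

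Second, and more substantively, the paper's proof contains a step that yours omits. The result \cite[Thm.~3]{wan-hilbert} is stated not for the single form $g_F$ but for the parallel-weight Hida family $\mathbf{g}_F$ passing through it, yielding
\[
\bigl(\CL_p(\mathbf{g}_F/M)\bigr)\supset\mathrm{char}_{\BI[\![\Gamma_M]\!]}\bigl(\mathfrak{X}_{\rm ord}(\mathbf{g}/M_\infty)\bigr)
\]
over $\BI[\![\Gamma_M]\!]$, where $\BI$ is the coefficient ring of the family. One must then specialize both sides along $\BI\to\cO$ at the arithmetic point corresponding to the ordinary $p$-stabilization of $g_F$: the big Selmer group specializes to $\mathfrak{X}_{\rm ord}(g/M_\infty)$ (cf.\ \cite[(3.5)]{skinner-urban}), and $\CL_p(g_F/M)$ is by construction the specialization of $\CL_p(\mathbf{g}_F/M)$. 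This descent from the Hida family to the individual form is what converts Wan's divisibility into the one claimed, and it is missing from your proposal.
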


\begin{proof} 
By \cite[Thm.~3]{wan-hilbert}, we have the divisibility 
\[
\bigl(\CL_p({\bf{g}}_{F}/M)\bigr) \supset {\rm ch}_{\BI[\![\Gamma_{M}]\!]}\bigl(\mathfrak{X}_{\rm ord}({\bf{g}}/M_\infty)\bigr)
\]
in $\BI[\![\Gamma_{M}]\!]$, where ${\bf{g}}_{F}$ denotes the parallel weight Hida family passing through $g_F$ 
(cf.~\cite{Hi1,Hi2}) and $\BI$ is its coefficient ring. 

Since $\mathfrak{X}_{\rm ord}({\bf{g}}/M_\infty)$ specializes to $\mathfrak{X}_{\rm ord}(g/M_\infty)$ under the map induced by the specialization $\BI \ra \cO$ corresponding to the $p$-ordinary stabilization of $g_F$ (cf.~\cite[(3.5)]{skinner-urban}), and 
$\CL_p(g_{F}/M)$ is defined as an analogous specialization of $\CL_p({\bf{g}}_{F}/M)$, the assertion follows. 
\end{proof}
\begin{rem}
As indicated in Remark~\ref{rem:cyc}(ii), 
Wan uses the above divisibility in conjunction with base change to remove the condition \eqref{eq:mult} from the result of \cite{skinner-urban} up to tensoring with $\Q_p$ (cf.~\cite[Thm.~4]{wan-hilbert}).
\end{rem}

\section{Main Conjectures over imaginary quadratic fields}

We collect some known results in the direction of Conjecture~\ref{g-var-IMC} (and some variants) in the case $F=\Q$. Although some of these results are known under weaker hypotheses, here we shall assume that
\begin{equation}\label{irr_K}\tag{irr$_K$}
\text{$\bar{\rho}_{g}$ is irreducible as $G_K$-representation,}
\end{equation}
where $K=M$ is imaginary quadratic in this section.

We refer the reader to $\S\S{1.2}$ and ${1.4}$ of \cite{eisenstein_cyc} for a review of the construction and interpolation property of the two-variable $p$-adic $L$-functions $\mathcal{L}_p^{\rm PR}(g/K)\in\Lambda_K$ and $\mathcal{L}_p^{\rm Gr}(g/K)\in\Lambda_K^{\rm ur}$ appearing below.

\subsection{Two-variable Main Conjectures}

\begin{conj}\label{conj:2-var-ord-IMC}
Let $g \in S_{2}(\Gamma_{0}(N))$ be a newform and $p>2$ a  prime of good ordinary reduction for $g$. 
Let $K$ be an imaginary quadratic field satisfying 
\eqref{irr_K}. Then $\mathfrak{X}_{\rm ord}(g/K_\infty)$ is $\Lambda_K$-torsion, with
\[
\bigl({\CL}_p^{\rm PR}(g/K))={\rm ch}_{\Lambda_K}\bigl(\mathfrak{X}_{\rm ord}(g/K_\infty)\bigr).
\]
\end{conj}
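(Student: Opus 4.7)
The plan is to follow the strategy sketched in the Introduction: lift Wan's three-variable divisibility (Theorem~\ref{3-var-IMC}) over a quartic CM field $M \supset K$, combine it with a two-variable zeta-element divisibility to get an equality of \emph{products} of characteristic ideals, and then separate the result into the two desired factors. Since the conjecture is stated only under $p>2$ and \eqref{irr_K}, the plan below works in the narrower range $p>3$, Hypothesis~\ref{H}, and $\mathcal{V}=\emptyset$, where current techniques apply; widening this range is the main obstacle.

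First, I would construct an auxiliary real quadratic field $F$ so that $M := KF$ satisfies the hypotheses of Theorem~\ref{3-var-IMC} for the base change $g_F$: concretely, $(pN, D_F) = 1$, conditions \eqref{spl-I} and \eqref{exc-I} for $M/F$, and a factorization $N\CO_F = \fn^+\fn^-$ with $\fn^-$ squarefree of an even number of prime factors, each being a prime where $\bar\rho_g$ is ramified. A Chebotarev-style construction, adjusting the ramification of $F$ prime-by-prime against $\bar\rho_g$ and constrained by \eqref{eq:intro-Heeg} and \eqref{eq:intro-spl}, should produce such an $F$. Applying Theorem~\ref{3-var-IMC} then yields $(\CL_p(g_F/M)) \supset \ch_{\Lambda_M}(\X_{\rm ord}(g/M_\infty))$ in $\Lambda_M$.

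Second, I would descend from $\Lambda_M$ to $\Lambda_K$ along $\Gamma_M \twoheadrightarrow \Gamma_K$. Artin formalism gives a factorization $\CL_p(g_F/M) \mapsto \CL_p^{\rm PR}(g/K)\cdot\CL_p^{\rm PR}(g^F/K)$, where $g^F$ denotes the quadratic twist of $g$ by the Dirichlet character cutting out $F$, and the underlying induced-representation decomposition produces a matching pseudo-isomorphism
\[
\X_{\rm ord}(g/M_\infty)\otimes_{\Lambda_M}\Lambda_K \sim \X_{\rm ord}(g/K_\infty) \oplus \X_{\rm ord}(g^F/K_\infty).
\]
For the opposite divisibility of products, I would invoke the two-variable zeta elements of \cite{BSTW}, whose explicit reciprocity laws upgrade Kato's cyclotomic divisibility to the two-variable setting over $K$, yielding $\ch_{\Lambda_K}(\X_{\rm ord}(g/K_\infty)) \supset (\CL_p^{\rm PR}(g/K))$ and its $g^F$-analogue, at least in $\Lambda_K\otimes\Q_p$. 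Multiplying the two divisibilities gives an equality of products in $\Lambda_K\otimes\Q_p$.

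The hard part is separating this product equality into the individual equalities for $g$ and $g^F$. A factor-by-factor comparison reduces to matching local conditions at primes $\ell\mid N$, and the Euler factor of $\CL_p^{\rm PR}$ at $\ell$ matches the change-of-Selmer term at $\ell$ \emph{cleanly} precisely when $\mathcal{V}=\emptyset$; this is the assumption under which the separation succeeds. Removing this hypothesis would require either an Euler system for $g/K$ producing the integral divisibility $\ch_{\Lambda_K}(\X_{\rm ord}(g/K_\infty))\supset(\CL_p^{\rm PR}(g/K))$ directly---so that separation becomes automatic---or a finer local deformation analysis at vexing primes. By this route, Conjecture~\ref{conj:2-var-ord-IMC} is proved in $\Lambda_K\otimes\Q_p$ under $p>3$, Hypothesis~\ref{H}, and $\mathcal{V}=\emptyset$ (matching Theorem~\ref{thm:2var-IMC}); the fully unconditional statement appears to require further Euler system input.
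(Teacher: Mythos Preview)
Your overall architecture (Wan over $M=FK$, descend to a product over $K$, then separate) matches the paper, but the two crucial steps are mis-sourced.

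\textbf{The opposite divisibility.} You claim that the zeta elements of \cite{BSTW} ``upgrade Kato's cyclotomic divisibility to the two-variable setting'' and directly give
\[
{\rm ch}_{\Lambda_K}\bigl(\mathfrak{X}_{\rm ord}(g/K_\infty)\bigr)\supset\bigl(\CL_p^{\rm PR}(g/K)\bigr).
\]
This is not what \cite{BSTW} supplies here. In the paper, \cite{BSTW} is used only for Proposition~\ref{Eq-MC}, i.e.\ the \emph{equivalence} between the ordinary and Greenberg two-variable conjectures, not for an Euler-system upper bound. The Remark following Theorem~\ref{thm:2var-IMC} says explicitly that an Euler system yielding the divisibility you invoke would \emph{remove} the hypothesis $\mathcal{V}=\emptyset$; in other words, that divisibility is not presently available. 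The paper instead obtains the opposite divisibility by passing to the Greenberg side (Proposition~\ref{Eq-MC}), specializing to the anticyclotomic line, and using the Heegner-point Kolyvagin system bound (Theorem~\ref{HMC-lm}) together with $\mu(\CL_p^{\rm BDP})=0$ (Proposition~\ref{mu}); this gives the anticyclotomic equalities of Theorem~\ref{thm:BDP}, from which the two-variable product \emph{equalities} in the proof of Theorem~\ref{thm:2var-IMC} are deduced.

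\textbf{The separation step.} Your ``factor-by-factor comparison of local conditions at $\ell\mid N$'' is not the mechanism. The paper separates by choosing \emph{two} auxiliary real quadratic fields $F,F'$ satisfying (i)--(vi) of Proposition~\ref{MC-lb-Sel}, obtaining two product equalities, and then---this is where $\mathcal{V}=\emptyset$ enters---observing that the third real quadratic subfield $F_0\subset FF'$ satisfies all the conditions except possibly (iii), which becomes vacuous when $\mathcal{V}=\emptyset$. Running the argument for $(g^F,g^{F'})$ via $F_0$ yields a third product equality that, combined with the first two, cancels to give the individual equality for $g$. Your proposal does not contain this idea, and the role you assign to $\mathcal{V}=\emptyset$ (matching Euler factors with change-of-Selmer terms) is not what is actually used.
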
 

Note that it follows from the comparison of $p$-adic $L$-functions in \cite[Prop.~84]{wan-hilbert} that Conjecture~\ref{conj:2-var-ord-IMC} is nothing but  Conjecture~\ref{g-var-IMC} in the current setting. 

\begin{conj}\label{conj:2-var-Gr-IMC}
Let $g \in S_{2}(\Gamma_{0}(N))$ be a newform and $p>2$ a prime of good reduction for $g$.  
Let $K$ be an imaginary quadratic field satisfying \eqref{eq:intro-spl}. 
Then $\mathfrak{X}_{\rm Gr}(g/K_\infty)$ is $\Lambda_K$-torsion, with 
\[
\bigl({\CL}_p^{\rm Gr}(g/K)\bigr)={\rm ch}_{\Lambda_K}(\mathfrak{X}_{\rm Gr}(g/K_\infty)\bigr)
\]
as ideals in $\Lambda_K^{\rm ur}$.
\end{conj}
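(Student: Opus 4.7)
The plan is to reduce Conjecture~\ref{conj:2-var-Gr-IMC} to the three-variable result over a quartic CM field (Theorem~\ref{3-var-IMC}) combined with the Heegner point Euler system, in the same spirit as the strategy outlined for Theorems~\ref{thm:cyc},~\ref{thm:HPMC},~\ref{thm:BDP}, and~\ref{thm:2var-IMC} in \S\ref{subsec:app-results}. First, I would choose an auxiliary real quadratic field $F$ with $(pN,D_F)=1$ such that $M=KF$ is a quartic CM extension of $F$ satisfying \eqref{spl-I} and \eqref{exc-I}, and for which the hypotheses (i)--(iv) of Theorem~\ref{3-var-IMC} are verified for $g_F$, $M/F$, and the factorization $N\cO_F=\fn^+\fn^-$. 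Wan's theorem then gives the divisibility
\[
\bigl(\CL_p(g_F/M)\bigr)\supset{\rm ch}_{\Lambda_M}\bigl(\mathfrak{X}_{\rm ord}(g_F/M_\infty)\bigr)
\]
in $\Lambda_M$, which is the starting point.

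Second, I would descend this three-variable statement to $K_\infty/K$. Artin formalism factors the CM $p$-adic $L$-function as a product of Perrin-Riou $L$-functions for $g$ and its quadratic twist $g^F$, while control of the Selmer group along the Galois-theoretic decomposition gives a corresponding factorization of characteristic ideals up to error terms controlled by the comparison of \cite[Prop.~84]{wan-hilbert}. This yields the product divisibility \eqref{eq:PR-div} for the ordinary (Perrin-Riou) Selmer groups and $p$-adic $L$-functions over $\Lambda_K$. To convert this into a Greenberg statement, I would invoke the two-variable zeta elements of \cite{BSTW} together with their explicit reciprocity laws, which bridge $\CL_p^{\rm PR}(g/K)$ and $\CL_p^{\rm Gr}(g/K)$ via a Perrin-Riou big-logarithm-type map and simultaneously relate the two flavors of Selmer groups. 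Integrality of the resulting divisibility in $\Lambda_K^{\rm ur}$ is secured by the vanishing of the $\mu$-invariant of $\mathcal{L}_p^{\rm BDP}$ proved in \cite{hsieh-special,Bu-mu}, yielding \eqref{eq:Gr-div}.

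Third, for the opposite divisibility I would descend further to the anticyclotomic tower $K_\infty^-/K$ and bound $\mathfrak{X}_{\rm Gr}(g/K_\infty^-)$ by applying the Kolyvagin system machinery of \cite{eisenstein,eisenstein_cyc} to the Heegner point Euler system; interpolating back to the two-variable setting and combining with Step~2 gives an equality of \emph{products} of characteristic ideals, one factor for $g$ and one for $g^F$.

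The main obstacle is the final step: \emph{separating the two factors} in the product identity so as to obtain the equality for $g$ alone. Under the hypothesis $\mathcal{V}=\emptyset$ this separation can be carried out by a direct local analysis at each bad prime (this is how Theorem~\ref{thm:2var-IMC} is proved), but in the general case of Conjecture~\ref{conj:2-var-Gr-IMC} one would need either a genuinely new local input at the vexing primes, or else a direct Euler-system divisibility
\[
{\rm ch}_{\Lambda_K}\bigl(\mathfrak{X}_{\rm Gr}(g/K_\infty)\bigr)\supset\bigl(\CL_p^{\rm Gr}(g/K)\bigr)
\]
extending the construction of \cite{BSTW,LLZ-K} beyond the $p$-distinguished Hecke-character twists available there. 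Either route appears to require substantial new work, and I would expect this separation — not the Iwasawa-theoretic descent itself — to be the genuine bottleneck.
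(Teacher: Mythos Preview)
The statement you were given is a \emph{conjecture}, and the paper does not prove it. As stated it assumes only good (not good ordinary) reduction at $p>2$ and only \eqref{eq:intro-spl} on $K$, which is strictly more general than anything the paper establishes. What the paper does prove is the equivalent ordinary conjecture (via Proposition~\ref{Eq-MC}) under the extra hypotheses $p>3$ good ordinary, \eqref{eq:intro-disc}, \eqref{eq:intro-Heeg}, \eqref{irr_K}, and $\mathcal{V}=\emptyset$; this is Theorem~\ref{thm:2var-IMC}. Your Steps~1--3 track that partial argument accurately, and you correctly identify the factor-separation as the genuine obstruction to the general statement.

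Where your sketch diverges from the paper is in \emph{how} the separation is carried out when $\mathcal{V}=\emptyset$. It is not a ``direct local analysis at each bad prime''. Instead the paper runs the whole machine with \emph{two} different auxiliary real quadratic fields $F,F'$ satisfying Lemma~\ref{exs}, multiplies the resulting product equalities to get a four-factor identity with $\bigl(\CL_p^{\rm Gr}(g/K)\bigr)^2$ on one side, and then cancels the $g^F$- and $g^{F'}$-factors by applying the same argument once more with the third real quadratic subfield $F_0\subset FF'$ (so that $g^{F_0}$ over $K$ has the same $p$-adic $L$-function and Selmer group product as $g^F\cdot g^{F'}$). The hypothesis $\mathcal{V}=\emptyset$ enters only here: $F_0$, being forced by $F$ and $F'$, need not satisfy condition~(iii) of Proposition~\ref{MC-lb-Sel}, and that condition is there solely to guarantee~(H3); when there are no vexing primes, (H3) is automatic and condition~(iii) can be dropped for $F_0$. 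So the separation is an algebraic trick with auxiliary twists, not a local computation, and the role of $\mathcal{V}=\emptyset$ is more indirect than your description suggests.
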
 


The central result of \cite{BSTW} is the existence of two-variable zeta element for $E$ over $K$,  
which leads to the following useful equivalence between the preceding two-variable main conjectures of different nature. 

\begin{thm}\label{Eq-MC}
Let $g \in S_{2}(\Gamma_{0}(N))$ be an elliptic newform, 
and $p \nmid 2N$ an ordinary prime for $g$. Let $K$ be an imaginary quadratic field satisfying \eqref{eq:intro-spl}, $(D_{K},N)=1$, and \eqref{irr_K}. Then the following are equivalent:
\begin{itemize}
\item[(i)] $\mathfrak{X}_{\rm ord}(g/K_\infty)$ is $\Lambda_K$-torsion, with
\[
\bigl({\CL}_p^{\rm PR}(g/K))\supset{\rm ch}_{\Lambda_K}\bigl(\mathfrak{X}_{\rm ord}(g/K_\infty)\bigr)\quad\textrm{in $\Lambda_K\otimes\Q_p$.
}
\]
\item[(ii)] $\mathfrak{X}_{\rm Gr}(g/K_\infty)$ is $\Lambda_K$-torsion, with
\[
\bigl({\CL}_p^{\rm Gr}(g/K)\bigr)\supset{\rm ch}_{\Lambda_K}(\mathfrak{X}_{\rm Gr}(g/K_\infty)\bigr)\quad\textrm{in $\Lambda_K^{\rm ur}\otimes\Q_p$.}
\]
\end{itemize}
The same conclusion holds for the opposite divisibilities, and before inverting $p$. In particular, Conjecture~\ref{conj:2-var-ord-IMC} and Conjecture~\ref{conj:2-var-Gr-IMC} are equivalent. 
\end{thm}

\begin{proof}
This is shown in \cite[\S9.3.2]{BSTW} (cf. \cite[Prop.~3.2.1]{eisenstein_cyc} or \cite[\S{3.3}]{ICTS}) building on a pair of four-term exact sequences coming from Poitou--Tate duality. 
\end{proof}

Taking the direct sum of two pairs of four-term exact sequences as in the proof of Theorem~\ref{Eq-MC},  one deduces the following.

\begin{cor}\label{cor:2-var-equiv}
Let $g\in S_2(\Gamma_0(N))$ and $g'\in S_2(\Gamma_0(N'))$ be newforms, $p\nmid 2NN'$ an ordinary prime for both $g$ and $g'$, and $K$ an imaginary quadratic field satisfying \eqref{eq:intro-spl} and $(D_K,NN')=1$, and such that \eqref{irr_K} holds for both $g$ and $g'$. Then the following are equivalent:
\begin{itemize}
\item[(i)] $\bigl(\CL_p^{\rm PR}(g/K)\cdot \CL_{p}^{\rm PR}(g'/K)\bigr)\supset
{\rm ch}_{\Lambda_K}(\mathfrak{X}_{\rm ord}(g/K_\infty))\cdot{\rm ch}_{\Lambda_K}(\mathfrak{X}_{\rm ord}(g'/K_\infty))$.
\item[(ii)] $\bigl(\CL_p^{\rm Gr}(g/K)\cdot \CL_{p}^{\rm Gr}(g'/K)\bigr)\supset
{\rm ch}_{\Lambda_K}(\mathfrak{X}_{\rm Gr}(g/K_\infty))\cdot{\rm ch}_{\Lambda_K}(\mathfrak{X}_{\rm Gr}(g'/K_\infty))$.
\end{itemize}
Moreover, the same holds for the opposite divisibility.
\end{cor}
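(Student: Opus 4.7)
The plan is to directly adapt the strategy used in the proof of Proposition~\ref{Eq-MC}. Recall that its proof proceeds from a pair of four-term Poitou--Tate exact sequences linking $\mathfrak{X}_{\rm ord}(g/K_\infty)$ with $\mathfrak{X}_{\rm Gr}(g/K_\infty)$ through local cohomology at the primes of $K$ above $p$, combined with the explicit reciprocity laws of \cite{BSTW}, which identify the image of the two-variable Beilinson--Flach class under the Perrin-Riou map (respectively the dual-exponential map) with $\mathcal{L}_p^{\rm PR}(g/K)$ (respectively $\mathcal{L}_p^{\rm Gr}(g/K)$), up to units.

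First I would write down the corresponding pair of Poitou--Tate four-term exact sequences for each of $g$ and $g'$ separately, using that the hypotheses \eqref{eq:intro-spl}, $(D_K,NN')=1$, and \eqref{irr_K} have been imposed for both forms so that the framework of Proposition~\ref{Eq-MC} applies in either case. Since all of the constructions involved --- Selmer groups, local cohomology at $v$ and $\bar v$, the Perrin-Riou and dual-exponential maps, and the Beilinson--Flach classes themselves --- are functorial in the input form, taking the direct sum of these pairs yields a new pair of four-term exact sequences of finitely generated $\Lambda_K$-modules whose middle terms now involve $\mathfrak{X}_{\rm ord}(g/K_\infty)\oplus\mathfrak{X}_{\rm ord}(g'/K_\infty)$, respectively $\mathfrak{X}_{\rm Gr}(g/K_\infty)\oplus\mathfrak{X}_{\rm Gr}(g'/K_\infty)$, and whose outer terms, after the reciprocity-law identification, are controlled by the products $\mathcal{L}_p^{\rm PR}(g/K)\cdot\mathcal{L}_p^{\rm PR}(g'/K)$ and $\mathcal{L}_p^{\rm Gr}(g/K)\cdot\mathcal{L}_p^{\rm Gr}(g'/K)$ respectively.

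Using that characteristic ideals are multiplicative in short exact sequences of torsion $\Lambda_K$-modules and that ${\rm ch}_{\Lambda_K}(A\oplus B)={\rm ch}_{\Lambda_K}(A)\cdot{\rm ch}_{\Lambda_K}(B)$, the divisibilities in (i) and (ii) can then be repackaged into a divisibility of characteristic ideals for the middle term of the two direct-summed sequences. At this point the argument of Proposition~\ref{Eq-MC} applies verbatim, yielding (i)$\Leftrightarrow$(ii); exactly the same reasoning treats the opposite divisibility, both with and without inverting $p$. The only mild obstacle I anticipate is bookkeeping: one must check that the reciprocity-law identifications of \cite{BSTW} really do transport the determinants of the direct-summed local maps to the products of $p$-adic $L$-functions on the nose (rather than just up to a spurious factor entangling the two forms), but this is automatic from the coordinatewise nature of the direct sum and the individual reciprocity laws.
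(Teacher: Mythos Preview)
Your proposal is correct and is essentially the paper's own argument: the paper simply remarks that taking the direct sum of the two pairs of four-term Poitou--Tate exact sequences used in the proof of Proposition~\ref{Eq-MC} (one pair for $g$ and one for $g'$) yields the corollary, which is exactly what you outline. Your additional comments on multiplicativity of characteristic ideals and the coordinatewise reciprocity-law identification are the natural fleshing-out of that one-line reduction.
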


\subsection{Anticyclotomic Main Conjectures}

A refinement of Kolyvagin's methods (in the style of \cite{howard} and \cite{nekovar}) developed in \cite{eisenstein,eisenstein_cyc} yields the first part of the following result. 
\begin{thm}\label{HMC-lm}
Let $g\in S_{2}(\Gamma_{0}(N))$ be an elliptic newform and $p$ an odd prime of good ordinary reduction for $g$. Let $K$ be an imaginary quadratic field satisfying \eqref{eq:intro-disc}, \eqref{eq:intro-Heeg}, and \eqref{irr_K}. Then the following hold:
\begin{itemize}
\item[(a)] Both $\mathfrak{X}_{\rm ord}(g/K_\infty^-)$  and $\rH^1_{\Fcal_\Lambda}(K,T_g\otimes\Lambda_K^-)$ have $\Lambda_K^-$-rank one, and 
\[
{\rm ch}_{\Lambda_K^-}\bigl(\X_{\rm ord}(g/K_\infty^-)_{\rm tor}\bigr)\supset{\rm ch}_{\Lambda_K^-}\bigl(\rH^1_{\Fcal_\Lambda}(K,T_g\otimes\Lambda_K^-)/(\boldsymbol{\kappa}_1^{\rm Heeg})\bigr)^2\quad\textrm{in $\Lambda_K^-\otimes\Q_p$}.
\]
\item[(b)] If $K$ also satisfies \eqref{eq:intro-spl}, then 
$\mathfrak{X}_{\rm Gr}(g/K_\infty^-)$ is $\Lambda_K^-$-torsion, and 
\[
{\rm ch}_{\Lambda_K^-}(\X_{\rm Gr}(g/K_\infty^-))\Lambda_K^{-,{\rm ur}}\supset(\Lcal_p^{\rm BDP}(g/K))\quad\textrm{in $\Lambda_K^{-,{\rm ur}}\otimes\Q_p$.}
\]
\end{itemize}
Moreover, if \eqref{eq:irred} holds, then both divisibilities hold integrally. 
\end{thm}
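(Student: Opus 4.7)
The plan is to deduce part (a) from the anticyclotomic Heegner point Kolyvagin system in the style of Howard \cite{howard}, refined as in \cite{eisenstein,eisenstein_cyc}, and then to obtain part (b) from (a) via a Poitou--Tate four-term exact sequence combined with the $\Lambda_K^-$-adic explicit reciprocity law of Castella--Hsieh \cite{cas-hsieh1} relating $\boldsymbol{\kappa}_1^{\rm Heeg}$ to $\Lcal_p^{\rm BDP}(g/K)$.

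For the rank statements in (a), I would use that the $\Lambda_K^-$-adic Heegner class $\boldsymbol{\kappa}_1^{\rm Heeg}$ is nonzero in $\rH^1_{\Fcal_\Lambda}(K,T_g\otimes\Lambda_K^-)$ by Cornut--Vatsal non-vanishing, which combined with Mazur's control theorem adapted to the anticyclotomic tower yields that both $\rH^1_{\Fcal_\Lambda}(K,T_g\otimes\Lambda_K^-)$ and, via global duality, $\X_{\rm ord}(g/K_\infty^-)$ have $\Lambda_K^-$-rank one. For the divisibility, I would extend $\boldsymbol{\kappa}_1^{\rm Heeg}$ to an anticyclotomic Kolyvagin system indexed by squarefree products $n=\ell_1\cdots\ell_r$ of Kolyvagin primes (inert in $K$ with $\ell\equiv 1\pmod{p}$ and other standard congruences), built from Heegner classes of conductor a $p$-power times $n$, and apply a Mazur--Rubin/Howard bound to conclude
$$
{\rm ch}_{\Lambda_K^-}\bigl(\X_{\rm ord}(g/K_\infty^-)_{\rm tor}\bigr)\supset{\rm ch}_{\Lambda_K^-}\bigl(\rH^1_{\Fcal_\Lambda}(K,T_g\otimes\Lambda_K^-)/(\boldsymbol{\kappa}_1^{\rm Heeg})\bigr)^2.
$$
Under \eqref{eq:irred} this divisibility holds integrally as in Howard's original argument; under only \eqref{irr_K}, one loses a bounded power of $p$, recovered after inverting $p$, by the refined Kolyvagin formalism developed in \cite{eisenstein,eisenstein_cyc}.

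For part (b), the strategy is to invoke the anticyclotomic analogue of Proposition \ref{Eq-MC}: Poitou--Tate duality yields two four-term exact sequences of $\Lambda_K^-$-modules linking $\X_{\rm ord}(g/K_\infty^-)$ and $\X_{\rm Gr}(g/K_\infty^-)$ through the local cohomology at the primes $v$ and $\overline{v}$ above $p$. The $\Lambda_K^-$-adic Gross--Zagier/Waldspurger formula of Castella--Hsieh identifies the image of $\boldsymbol{\kappa}_1^{\rm Heeg}$ under the Perrin-Riou big logarithm at $v$, up to a unit in $\Lambda_K^{-,{\rm ur}}$, with $\Lcal_p^{\rm BDP}(g/K)$; threading this equality through the four-term sequences translates the divisibility in (a) into the divisibility in (b) in $\Lambda_K^{-,{\rm ur}}\otimes\Q_p$, and the integrality under \eqref{eq:irred} survives since the reciprocity law is an integral statement.

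The main obstacle is executing the Kolyvagin system bound under only \eqref{irr_K}: without surjectivity of $\bar\rho_g$, the inductive control on Selmer quotients modulo $p^n$ can acquire extra factors at primes where $\bar\rho_g|_{G_K}$ has small image, producing a finite-index discrepancy that a priori obstructs integrality and is absorbed only after inverting $p$. This is precisely where the refined Kolyvagin machinery of \cite{eisenstein,eisenstein_cyc}, careful about lattices and reducibility of residual restrictions, does the essential work.
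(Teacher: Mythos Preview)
Your proposal is correct and follows essentially the same approach as the paper. The paper's own proof simply cites the relevant results: part (a) is \cite[Thm.~5.5.2]{eisenstein_cyc} (and \cite[Thm.~B]{howard} under \eqref{eq:irred}), while part (b) is deduced from (a) via \cite[Thm.~5.2]{BCK}; your outline unpacks precisely the content of those references---the Heegner point Kolyvagin system argument for (a), and the Poitou--Tate plus explicit reciprocity law passage for (b).
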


\begin{proof}
Part (a) is contained in \cite[Thm.~5.5.2]{eisenstein_cyc}, and part (b) then follows from  \cite[Thm.~5.2]{BCK}. Under \eqref{eq:irred}, part (a) follows from \cite[Thm.~B]{howard}, and part (b) again from \cite[Thm.~5.2]{BCK}.
\end{proof}

The following vanishing of $\mu$-invariant result based on Hida's ideas \cite{hidamu} will also play an important role in our arguments.

\begin{prop}\label{mu}
Let $g\in S_2(\Gamma_0(N))$ be an elliptic newform with good reduction at $p>2$, and suppose $K$ is an imaginary quadratic field satisfying  \eqref{eq:intro-disc}, \eqref{eq:intro-Heeg}, \eqref{eq:intro-spl}, and \eqref{irr_K}. Then 
\[
\mu(\CL_{p}^{\rm Gr}(g/K))=\mu(\CL_{p}^{\rm BDP}(g/K))=0.
\]
\end{prop}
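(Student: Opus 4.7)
The plan is to split the two assertions and reduce the Perrin-Riou case to the BDP case by passing through the two-variable Greenberg $p$-adic L-function $\mathcal{L}_p^{\rm Gr}(g/K)$. For $\mu(\mathcal{L}_p^{\rm BDP}(g/K))=0$, the claim is a direct application of the main theorems of \cite{hsieh-special} and \cite{Bu-mu}, which adapt Hida's non-vanishing modulo $p$ technique \cite{hidamu} for toric periods to the anticyclotomic Rankin--Selberg / BDP setting. The hypotheses \eqref{eq:intro-disc}, \eqref{eq:intro-Heeg}, \eqref{eq:intro-spl}, and \eqref{irr_K} match (or are strictly stronger than) the ones assumed in those references, so this first assertion is immediate.

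For $\mu(\mathcal{L}_p^{\rm PR}(g/K))=0$ I would proceed in two steps. Step one transfers the BDP vanishing to the Greenberg $p$-adic L-function via the natural projection $\pi\colon\Lambda_K^{\rm ur}\twoheadrightarrow\Lambda_K^{-,{\rm ur}}$, under which $\mathcal{L}_p^{\rm Gr}(g/K)$ maps to $\mathcal{L}_p^{\rm BDP}(g/K)$ (this is precisely how $\mathcal{L}_p^{\rm Gr}$ was set up in the introduction). The key point is that $\mu$-invariants can only grow under such a surjection of complete regular local rings: if $p^{\mu}$ exactly divides $f\in\Lambda_K^{\rm ur}$, then $p^\mu\mid\pi(f)$, so $\mu(\pi(f))\ge\mu(f)$. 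Hence $\mu(\mathcal{L}_p^{\rm BDP}(g/K))=0$ forces $\mu(\mathcal{L}_p^{\rm Gr}(g/K))=0$.

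Step two invokes the explicit reciprocity laws for the two-variable Beilinson--Flach zeta element proved in \cite{BSTW} and already exploited in Proposition~\ref{Eq-MC}: both $\mathcal{L}_p^{\rm PR}(g/K)$ and $\mathcal{L}_p^{\rm Gr}(g/K)$ arise as the images of a common zeta element under, respectively, the ordinary Perrin-Riou regulator at $v$ and the Coleman-style logarithm at $\overline{v}$. The resulting identities differ only by explicit local Euler-type factors at the primes above $p$, each of which has trivial $\mu$-invariant. Consequently $\mu(\mathcal{L}_p^{\rm PR}(g/K))=\mu(\mathcal{L}_p^{\rm Gr}(g/K))=0$.

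The main obstacle lies in step two: one must verify carefully that the local intertwining factors between the two regulators truly contribute nothing to the $\mu$-invariant. This is essentially bookkeeping at the split primes above $p$ carried out in \cite{BSTW}, but it is the only place where the two-variable refinement differs from the well-understood anticyclotomic picture. Once this is settled, the implications $\mu(\mathcal{L}_p^{\rm BDP})=0\Rightarrow\mu(\mathcal{L}_p^{\rm Gr})=0\Rightarrow\mu(\mathcal{L}_p^{\rm PR})=0$ close.
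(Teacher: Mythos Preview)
Your treatment of $\mu(\mathcal{L}_p^{\rm BDP}(g/K))=0$ is fine and matches the paper: it is a direct citation of \cite{hsieh-special} (and \cite{Bu-mu}).

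For $\mu(\mathcal{L}_p^{\rm PR}(g/K))=0$, however, your Step~2 has a genuine gap, and the paper's route is both simpler and avoids it. The explicit reciprocity laws in \cite{BSTW} express $\mathcal{L}_p^{\rm PR}(g/K)$ and $\mathcal{L}_p^{\rm Gr}(g/K)$ as the images of the Beilinson--Flach class under regulators at the \emph{two different} primes $v$ and $\overline{v}$. That is, one has identities of the shape ${\rm Reg}_v({\rm loc}_v\,z)=c_1\cdot\mathcal{L}_p^{\rm PR}$ and ${\rm Reg}_{\overline v}({\rm loc}_{\overline v}\,z)=c_2\cdot\mathcal{L}_p^{\rm Gr}$, but ${\rm loc}_v\,z$ and ${\rm loc}_{\overline v}\,z$ are different local classes, and there is no a~priori comparison of their $\mu$-invariants. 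So even granting that the fudge factors $c_1,c_2$ have trivial $\mu$, your argument does not yield $\mu(\mathcal{L}_p^{\rm PR})=\mu(\mathcal{L}_p^{\rm Gr})$. What the zeta-element machinery actually produces is the equivalence of divisibilities in Proposition~\ref{Eq-MC} via Poitou--Tate four-term sequences, not a direct identity between the two $p$-adic $L$-functions.

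The paper bypasses this entirely: it simply observes (by a direct comparison of interpolation formulas, recorded as \cite[Prop.~1.4.5]{eisenstein_cyc}) that the image of $\mathcal{L}_p^{\rm PR}(g/K)$ under the anticyclotomic projection $\Lambda_K\to\Lambda_K^{-,{\rm ur}}$ already generates the same ideal as $\mathcal{L}_p^{\rm BDP}(g/K)$. Since $\mu$ can only increase under such a projection (exactly the inequality you used in your Step~1), the vanishing $\mu(\mathcal{L}_p^{\rm BDP})=0$ immediately forces $\mu(\mathcal{L}_p^{\rm PR})=0$. No detour through $\mathcal{L}_p^{\rm Gr}$ or the zeta element is needed.
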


\begin{proof}
By \cite[Thm.~B]{hsieh-special}, $\CL_{p}^{\rm BDP}(g/K)$ has vanishing $\mu$-invariant. Since a direct comparison of the interpolation properties shows that the projection of $\CL_p^{\rm Gr}(g/K)$ to $\Lambda_K^{-,{\rm ur}}$ generates the same ideal as $\CL_p^{\rm BDP}(g/K)$ (see \cite[Prop.~1.4.5]{eisenstein_cyc}), the result follows.
\end{proof}

\section{Base change} 



Let $g \in S_{2}(\Gamma_{0}(N))$ be an elliptic newform, 
and $p>3$ a good ordinary prime for $g$ such that \eqref{eq:irr} holds.  
\subsection{Base change} Let $M/F$ be a CM quadratic extension of a real quadratic field $F$ for the form $M=FK$ with $K$ an imaginary quadratic field. 

For the $\Z_p^2$-extension $\tilde{K}_\infty := FK_\infty$ of $M$, put $\tilde{\Lambda}_{K}:=\cO[\![\Gal(\tilde{K}_{\infty}/M)]\!]$ and let 
$$
\pi_K:\Lambda_M \rightarrow\tilde{\Lambda}_K\simeq\Lambda_K
$$
denote the map arising from the projection $\Gal(M_\infty/M)\twoheadrightarrow \Gal(\tilde{K}_\infty/M)$. 

\begin{lemma}\label{Sel-fac}
We have the divisibility
\[
\pi_K\bigl({\rm ch}_{\Lambda_M}(\mathfrak{X}_{\rm ord}(g/M_\infty))\bigr)\supset{\rm ch}_{\Lambda_K}\bigl(\mathfrak{X}_{\rm ord}(g/K_\infty)\bigr)\cdot{\rm ch}_{\Lambda_K}\bigl(\mathfrak{X}_{\rm ord}(g^{F}/K_\infty)\bigr)
\]
in $\Lambda_K$, where $g^F$ is the twist of $g$ by the quadratic character corresponding to $F/\Q$.
\end{lemma}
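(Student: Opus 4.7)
The plan is to combine two independent ingredients: a Shapiro-type decomposition identifying the ordinary Selmer module over the intermediate $\Z_p^2$-extension $\tilde{K}_\infty = FK_\infty$ of $M$ with the direct sum of the corresponding Selmer modules over $K_\infty/K$ for $g$ and its quadratic twist $g^F$, together with a standard Iwasawa-theoretic control theorem descending from $M_\infty/M$ down to $\tilde{K}_\infty/M$.

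First I would establish the Shapiro identification. Since $M = FK$ with $F \cap K = \Q$, the nontrivial character of $\Gal(M/K)$ coincides with the restriction $\chi_F|_{G_K}$, where $\chi_F\colon G_\Q \to \{\pm 1\}$ is the quadratic character associated to $F/\Q$. The projection formula then gives an isomorphism of $\cO[G_K]$-modules
$$
\mathrm{Ind}_{G_M}^{G_K}\bigl(T_g|_{G_M}\bigr) \;\cong\; T_g|_{G_K} \oplus T_{g^F}|_{G_K}.
$$
Identifying $\Gal(\tilde{K}_\infty/M)$ with $\Gamma_K$ via restriction, global and local Shapiro's lemma (applied orbit-by-orbit to the primes of $M$ above each place of $K$) produces an isomorphism
$$
\mathfrak{X}_{\rm ord}(g/\tilde{K}_\infty) \;\cong\; \mathfrak{X}_{\rm ord}(g/K_\infty) \oplus \mathfrak{X}_{\rm ord}(g^{F}/K_\infty)
$$
as $\Lambda_K$-modules, provided the local Selmer conditions match under the decomposition. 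Away from $p$ the conditions are unrestricted and match trivially; at primes above $p$ one uses \eqref{spl-I} to enumerate the primes of $M$ above each $v \mid p$ in $K$, together with the fact that $g^F$ is ordinary at $p$ (since $\chi_F$ is unramified at $p$ by \eqref{ur}), to see that the filtration $\mathrm{Fil}^+_w T_g$ at $w \mid p$ in $M$ matches $\mathrm{Fil}^+_v T_g \oplus \mathrm{Fil}^+_v T_{g^F}$ at $v \mid p$ in $K$ under the induction. Multiplicativity of characteristic ideals over direct sums then yields
$$
\mathrm{ch}_{\Lambda_K}\bigl(\mathfrak{X}_{\rm ord}(g/\tilde{K}_\infty)\bigr) \;=\; \mathrm{ch}_{\Lambda_K}\bigl(\mathfrak{X}_{\rm ord}(g/K_\infty)\bigr) \cdot \mathrm{ch}_{\Lambda_K}\bigl(\mathfrak{X}_{\rm ord}(g^{F}/K_\infty)\bigr).
$$

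Second, I would invoke the control theorem. The kernel of $\pi_K\colon \Lambda_M \twoheadrightarrow \Lambda_K$ is the augmentation ideal $I$ of $\Gal(M_\infty/\tilde{K}_\infty)\simeq\Z_p$, which is principal and generated by a non-zero-divisor. A Greenberg-style Poitou--Tate argument, using \eqref{irr_K} (equivalently \eqref{eq:irr}) to kill the relevant $\rH^0$-type correction terms, provides a surjection
$$
\mathfrak{X}_{\rm ord}(g/M_\infty)\big/I\cdot\mathfrak{X}_{\rm ord}(g/M_\infty) \;\twoheadrightarrow\; \mathfrak{X}_{\rm ord}(g/\tilde{K}_\infty)
$$
with pseudo-null (in fact finite) kernel. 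Since characteristic ideals are insensitive to pseudo-null submodules and specialize compatibly along quotients by principal non-zero-divisors, this yields the divisibility
$$
\pi_K\bigl(\mathrm{ch}_{\Lambda_M}(\mathfrak{X}_{\rm ord}(g/M_\infty))\bigr) \;\supset\; \mathrm{ch}_{\Lambda_K}\bigl(\mathfrak{X}_{\rm ord}(g/\tilde{K}_\infty)\bigr)
$$
in $\Lambda_K$. Chaining this with the equality from the first step completes the proof.

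The hard part will be the bookkeeping in the Shapiro step: one must verify the compatibility of the local Selmer conditions case by case, since the splitting behaviour of a prime of $K$ in $M/K$ depends on how the underlying rational prime splits in $F$, and in particular the ordinary filtration at primes above $p$ has to be matched correctly against the induction decomposition. Once this verification is in place, the remainder of the argument is routine Iwasawa-theoretic formalism.
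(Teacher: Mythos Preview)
Your proposal is correct and follows essentially the same two-step strategy as the paper: a Shapiro-type decomposition identifying $\mathfrak{X}_{\rm ord}(g/\tilde{K}_\infty)$ with $\mathfrak{X}_{\rm ord}(g/K_\infty)\oplus\mathfrak{X}_{\rm ord}(g^F/K_\infty)$, followed by a control/specialization argument for characteristic ideals under $\pi_K$. The paper phrases the first step on the discrete side (showing $\rH^1_{\Fcal_{\rm ord}}(M,\mathcal{M}_g)[I_K]\simeq\rH^1_{\Fcal_{\rm ord}}(M,T_g\otimes\tilde{\Lambda}_K^\vee)$ and then splitting by $\eta_F$), citing Propositions~3.6 and~3.7 of \cite{skinner-urban}, and handles the second step by invoking Corollary~3.8 of \emph{op.\,cit.}; your Pontryagin-dual formulation is equivalent.

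One small correction: in the Shapiro step you say the local conditions away from $p$ are ``unrestricted'', but in the paper's definition of $\rH^1_{\Fcal_\Lambda}$ they are strict (classes map to zero in $\rH^1(M_w,\mathcal{M}_g)$ for $w\in\Sigma$, $w\nmid p$). This does not affect your argument, since local Shapiro also matches strict conditions, but the justification you give for that case should be amended. Also, the paper obtains an actual isomorphism in the control step (not merely a surjection with pseudo-null kernel), so the appeal to \eqref{irr_K} to kill $\rH^0$-terms is not needed there; your weaker claim is of course still sufficient for the divisibility.
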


\begin{proof}
It follows readily from Shapiro's lemma and \cite[Prop.~3.6 and Prop.~3.7]{skinner-urban} that the restriction map $\rH^1(M,T_g\otimes\tilde{\Lambda}_K^\vee)\rightarrow\rH^1(M,\CM_g)[I_K]$, where $I_K=\ker(\pi_K)$, induces $\Lambda_K$-module isomorphisms
\begin{align*}
\rH^1_{\Fcal_\Lambda}(M,\mathcal{M}_g)[I_K]&\simeq\rH^1_{\Fcal_\Lambda}(M,T_g\otimes\tilde{\Lambda}_K^\vee)\\
&\simeq\rH^1_{\Fcal_\Lambda}(M,T_g\otimes\Lambda_K^\vee)\oplus\rH^1_{\Fcal_\Lambda}(M,T_g\otimes\eta_F\otimes\Lambda_K^\vee). 
\end{align*}
Here $\eta_F$ is the non-trivial character of $\Gal(F/\Q)\simeq \Gal(M/K)$ and so $$T_g\otimes\eta_F\simeq T_{g^F}.$$ Since by \cite[Cor.~3.8]{skinner-urban} we have the divisibility
\[
\pi_K\bigl({\rm ch}_{\Lambda_M}(\mathfrak{X}_{\rm ord}(g/M_\infty))\bigr)\supset{\rm ch}_{\Lambda_K}\bigl(\mathfrak{X}_{\rm ord}(g/M_\infty)/I_K\bigr)
\]
in $\Lambda_M/I_K\simeq\Lambda_K$, the proof concludes.
\end{proof}

\begin{lemma}\label{pL-fac}
Let the setting be as in Lemma \ref{Sel-fac}. Then we have 
$$
\bigl(\pi_K(\CL_p(g_{F}/M))\bigr)=(\CL_p^{\rm PR}(g/K)\cdot \CL_p^{\rm PR}(g^{F}/K)\bigr)
$$
in $\Lambda_K \otimes\Q_p$.
\end{lemma}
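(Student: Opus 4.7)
The plan is to match the two sides via their interpolation formulas at finite-order characters of $\Gamma_K$, which form a Zariski-dense subset of $\mathrm{Spec}(\Lambda_K\otimes\Q_p)$. Since the equality is asserted as ideals in $\Lambda_K\otimes\Q_p$, it suffices to show the two sides agree up to a unit in that ring.

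By \cite[Thm.~82(i)]{wan-hilbert}, $\CL_p(g_F/M)$ interpolates the algebraic central values $L^{\rm alg}(g_F/M\otimes\chi,1)$ as $\chi$ runs over finite-order characters of $\Gamma_M$. Applying $\pi_K$ restricts attention to $\chi$ factoring through $\Gal(\tilde{K}_\infty/M)\simeq\Gal(K_\infty/K)$; such $\chi$ correspond to finite-order characters $\psi$ of $\Gamma_K$ via $\chi=\psi|_{G_M}$.

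Since $g_F$ is the base change of $g$ to $F$ we have $\rho_{g_F}|_{G_M}=\rho_g|_{G_M}$. Inducing from $G_M$ to $G_K$, using that $\Gal(M/K)\simeq\Gal(F/\Q)$ carries the nontrivial character $\eta_F|_{G_K}$ and applying the projection formula, yields
\[
\mathrm{Ind}_{G_M}^{G_K}\bigl(\rho_g|_{G_M}\otimes\psi|_{G_M}\bigr)\;\simeq\;(\rho_g\otimes\psi)|_{G_K}\;\oplus\;(\rho_{g^F}\otimes\psi)|_{G_K},
\]
hence the classical factorization
\[
L(g_F/M\otimes\chi,s)\;=\;L(g/K\otimes\psi,s)\cdot L(g^F/K\otimes\psi,s).
\]
Combining this with the interpolation formulas for $\CL_p^{\rm PR}(g/K)$ and $\CL_p^{\rm PR}(g^F/K)$ reviewed in \cite[\S{1.2}]{eisenstein_cyc}, one finds that $\pi_K(\CL_p(g_F/M))$ and $\CL_p^{\rm PR}(g/K)\cdot\CL_p^{\rm PR}(g^F/K)$ produce proportional specializations at each $\psi$, the proportionality depending only on the ratios of local Euler factors at primes in $\Sigma\setminus\{p\}$, on the $p$-adic multipliers, and on the period normalizations. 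Zariski density of the characters $\psi$ together with the characterization of $p$-adic $L$-functions by interpolation then forces the desired equality of ideals.

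The main obstacle is the period comparison: the CM period attached to the quartic field $M$ underlying $\CL_p(g_F/M)$ must be shown to factor, up to a unit in $\Lambda_K\otimes\Q_p$, as the product of the periods entering $\CL_p^{\rm PR}(g/K)$ and $\CL_p^{\rm PR}(g^F/K)$. The analog for $F=\Q$ is \cite[Prop.~84]{wan-hilbert}; the present quartic case requires adapting that calculation to track the extra period attached to the real quadratic field $F$ inside $M$, and inverting $p$ provides the flexibility needed to absorb any integral discrepancies between the various normalizations.
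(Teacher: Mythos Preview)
Your approach---compare interpolation formulas via the Artin-formalism factorization $L(g_F/M\otimes\chi,s)=L(g/K\otimes\psi,s)\cdot L(g^F/K\otimes\psi,s)$ and invoke Zariski density of finite-order characters---is exactly the paper's; its proof is the one line ``This just follows from a direct comparison of the interpolation properties (cf.\ \cite[Prop.~84]{wan-heegner}).''

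Two small corrections to your write-up. First, the periods normalizing $\CL_p(g_F/M)$ and $\CL_p^{\rm PR}(\cdot/K)$ are automorphic (canonical/Hida periods or Petersson norms), not CM periods; CM periods enter $\CL_p^{\rm Gr}$ and $\CL_p^{\rm BDP}$, not the Perrin-Riou--type objects appearing here. Second, you overstate the difficulty: the relation between the canonical period of the base change $g_F$ and the product of those of $g$ and $g^F$ is a standard consequence of Shimura's period relations, yielding an algebraic ratio. Since the lemma is asserted only in $\Lambda_K\otimes\Q_p$, any such nonzero algebraic factor is a unit, so there is no ``main obstacle'' left---this is precisely why the paper can sidestep the integral period comparison flagged in Remark~\ref{rem:cyc}(ii) while still using this lemma freely.
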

\begin{proof}
This just follows from a direct comparison of the interpolation properties 
(cf. \cite[Prop.~84]{wan-heegner}). 
\end{proof}


\subsection{Proofs of main results}\label{ss:proofs}

We can now complete the proof of the results stated in \S\ref{subsec:app-results}. We begin with the following key proposition.

\begin{prop}\label{MC-lb-Sel}
Let $g \in S_{2}(\Gamma_{0}(N))$ be an elliptic newform 
and $p\geq 5$ a prime of good ordinary reduction for $g$. Let $K$ be an imaginary quadratic field satisfying  \eqref{eq:intro-disc}, \eqref{eq:intro-Heeg}, \eqref{eq:intro-spl}, and \eqref{irr_K}. Suppose $F$ is a real quadratic field of discriminant $D_F$ satisfying the following hypotheses:
\begin{itemize}
\item[(i)] $p$ is inert in $F$.
\item[(ii)] $D_F$ is odd and every prime dividing $D_F$ splits in $K$.
\item[(iii)] Every prime $\ell\vert N$ is inert in $F$ if $\ell \equiv -1 \pmod {p}$, and is split in $F$ otherwise.
\item[(iv)] $\eqref{irr_M}$ holds for $M=FK$.
\item[(v)] $\bar{\rho}_g|_{G_{F(\zeta_p)}}$ is irreducible.
\item[(vi)] If $p=5$, then $F\neq\Q(\zeta_5)^+$.
\end{itemize}
%
%
Then we have the divisibilities 
\[
\bigl(\CL_p^{\rm PR}(g/K)\cdot \CL_{p}^{\rm PR}(g^{F}/K)\bigr)\supset
{\rm ch}_{\Lambda_K}(\mathfrak{X}_{\rm ord}(g/K_\infty))\cdot{\rm ch}_{\Lambda_K}(\mathfrak{X}_{\rm ord}(g^{F}/K_\infty))\quad\textrm{in $\Lambda_{K}$,}
\]
and
\[
\bigl(\CL_p^{\rm Gr}(g/K)\cdot \CL_{p}^{\rm Gr}(g^{F}/K)\bigr)\supset
{\rm ch}_{\Lambda_K}(\mathfrak{X}_{\rm Gr}(g/K_\infty))\cdot{\rm ch}_{\Lambda_K}(\mathfrak{X}_{\rm Gr}(g^{F}/K_\infty))\quad\textrm{in $\Lambda_K^{\rm ur}$.}
\]
\end{prop}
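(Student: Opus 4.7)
The plan is to apply Theorem~\ref{3-var-IMC} (Wan) over the quartic CM field $M := FK$ and then push the resulting divisibility down to the $\Z_p^2$-extension $K_\infty/K$ via the projection $\pi_K: \Lambda_M \twoheadrightarrow \Lambda_K$. Lemma~\ref{pL-fac} identifies $\pi_K(\CL_p(g_F/M))$ with the product $\CL_p^{\rm PR}(g/K)\cdot\CL_p^{\rm PR}(g^F/K)$ (after inverting $p$), while Lemma~\ref{Sel-fac} provides the matching divisibility for Selmer characteristic ideals; together they yield the first divisibility of the proposition in $\Lambda_K\otimes\Q_p$. Proposition~\ref{mu} then lets me remove the $p$-denominators, and Corollary~\ref{cor:2-var-equiv} converts the PR divisibility into its Greenberg counterpart.

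The first step is to confirm that (i)--(vi), together with the standing assumptions on $(g,p,K)$, supply every condition required by Theorem~\ref{3-var-IMC} and Hypothesis~\ref{H} for the pair $(g_F,M)$. Items (i) and (ii) yield $(pN,D_F)=1$, \eqref{spl-I}, \eqref{exc-I}, and $(N\cO_F,D_{M/F})=(1)$. Since \eqref{eq:intro-Heeg} forces every $\ell\vert N$ to split in $K$, every place of $F$ above such $\ell$ splits in $M=FK$, so $\fn^-=(1)$; conditions (iii) and (iv) of Theorem~\ref{3-var-IMC} hold vacuously. Our (iv) is precisely \eqref{irr_M}, while (v), (vi), and Remark~\ref{rem-H}(iii) deliver Hypothesis~\ref{H}(H1), and (H2) is standard. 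The key check is (H3): the choice (iii) is engineered so that every vexing $\ell\vert N$ (which by definition satisfies $\ell\equiv -1\pmod p$) becomes inert in $F$, making the unique place $v\vert\ell$ of $F$ have residue norm $q_v=\ell^2\equiv 1\not\equiv -1\pmod p$; at all other primes of $F$ either $\bar{\rho}_{g_F}\vert_{I_{F_v}}$ stays absolutely irreducible or $\bar{\rho}_{g_F}\vert_{G_{F_v}}$ is reducible, so (H3) is vacuous there.

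With the hypotheses verified, Theorem~\ref{3-var-IMC} gives $(\CL_p(g_F/M))\supset {\rm ch}_{\Lambda_M}(\mathfrak{X}_{\rm ord}(g/M_\infty))$ in $\Lambda_M$. Applying $\pi_K$ and invoking Lemmas~\ref{Sel-fac} and \ref{pL-fac} produces the first divisibility in $\Lambda_K\otimes\Q_p$. To upgrade it to $\Lambda_K$ itself I would apply Proposition~\ref{mu} to both $g$ and $g^F$: the hypotheses transfer to $g^F$ because its conductor is supported on primes dividing $ND_F$, all of which split in $K$ by \eqref{eq:intro-Heeg} and (ii), and \eqref{irr_K} is preserved under quadratic twist. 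Hence $\mu\bigl(\CL_p^{\rm PR}(g/K)\cdot\CL_p^{\rm PR}(g^F/K)\bigr)=0$, and since $\Lambda_K$ is a UFD (the completed group ring of $\Z_p^2$ over $\cO$), the divisibility holds in $\Lambda_K$. The second (Greenberg) divisibility is then immediate from Corollary~\ref{cor:2-var-equiv} applied with $(g,g^F)$; the numerical hypotheses of that corollary are easily checked from our setup.

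The main technical obstacle is Hypothesis~\ref{H}(H3), but as above this is essentially built into the choice (iii) of $F$, and the argument for its verification is really a matter of combining the Heegner hypothesis with the prescribed splitting behavior in $F$. Everything else is formal, leveraging Wan's deep three-variable result and the Beilinson--Flach comparison underlying Corollary~\ref{cor:2-var-equiv}.
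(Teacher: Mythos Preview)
Your proposal is correct and follows essentially the same route as the paper: verify the hypotheses of Theorem~\ref{3-var-IMC} for $M=FK$ (including Hypothesis~\ref{H} via the splitting conditions (iii), (v), (vi)), descend via $\pi_K$ using Lemmas~\ref{Sel-fac} and~\ref{pL-fac}, and then use $\mu$-vanishing together with the PR/Greenberg equivalence to obtain both integral divisibilities. The only cosmetic difference is the order of the last two steps: the paper first passes to the Greenberg side in $\Lambda_K^{\rm ur}\otimes\Q_p$, invokes $\mu(\CL_p^{\rm Gr})=0$ (via $\CL_p^{\rm BDP}$) to go integral there, and then returns to the PR side, whereas you apply Proposition~\ref{mu} directly on the PR side and then invoke Corollary~\ref{cor:2-var-equiv}; both orderings are valid.
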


\begin{proof}
Note that the CM quadratic extension $M/F$ and the residual representation  $\bar{\rho}_{g}|_{G_{F}}$ satisfy the hypotheses of Theorem~\ref{3-var-IMC}. Indeed, the conditions $(pN,D_F)=1$ and $(N\mathcal{O}_F,D_{M/F})=1$ are clear, and hypotheses (i)--(iii)  and \eqref{eq:intro-Heeg} imply \eqref{spl-I} and \eqref{exc-I}. Further 
(ii) implies $\fn^{-}=\cO_F$ and so hypotheses (iii) and (iv) of Theorem \ref{3-var-IMC} are vacuous. Hypothesis~\ref{H} is readily verified: (H1) follows by (v) and (vi), (H2) follows as in \cite[Thm.~103]{wan-heegner} (indeed, by our choice of $F$, the base change to $F$ of a minimal  modular lifting of $\bar{\rho}_g$ gives a minimal modular lifting of $\bar{\rho}_{g_F}$), and (H3) also follows by condition (iii) on $F$.

Now Theorem \ref{3-var-IMC} leads to the divisibility 
\[
\bigl(\pi_K(\CL_p(g_F/M))\bigr) \supset \pi_K\bigl({\rm ch}_{\Lambda_M}\bigl(\mathfrak{X}_{\rm ord}(g/M_\infty))\bigr)
\]
in $\Lambda_K$, and so by Lemmas~\ref{Sel-fac} and \ref{pL-fac}, we have 
\[
\bigl(\CL_p^{\rm PR}(g/K)\cdot \CL_p^{\rm PR}(g^{F}/K)\bigr)\supset
{\rm ch}_{\Lambda_K}(\mathfrak{X}_{\rm ord}(g/K))\cdot{\rm ch}_{\Lambda_K}(\mathfrak{X}_{\rm ord}(g^{F}/K))
\]
in $\Lambda_{K}\otimes\Q_{p}$. In turn Proposition \ref{Eq-MC} (and its proof) implies 
\begin{equation}\label{div-rat}
\bigl(\CL_p^{\rm Gr}(g/K)\cdot \CL_p^{\rm Gr}(g^{F}/K)\bigr)\supset
{\rm ch}_{\Lambda_K}(\mathfrak{X}_{\rm Gr}(g/K))\cdot{\rm ch}_{\Lambda_K}(\mathfrak{X}_{\rm Gr}(g^{F}/K))
\end{equation}
in $\Lambda_K^{\rm ur}\otimes_{}\Q_p$. Since 
$\mu(\CL_p^{\rm Gr}(g/K)\cdot \CL_p^{\rm Gr}(g^{F}/K))=0$ by Proposition~\ref{mu}, the divisibility \eqref{div-rat} holds integrally in $\Lambda_K^{\rm ur}$. 
By again appealing to Proposition \ref{Eq-MC}, the proof concludes. 
\end{proof}

\begin{rem} 
In the case $p=5$, if $g$ corresponds to an elliptic curve $E/\Q$, then the condition $F\neq \Q(\zeta_5)^{+}$ is inessential (cf.~Remark~\ref{rem-H}(iii)).  
\end{rem}

The existence of $F$ satisfying the conditions in Proposition~\ref{MC-lb-Sel} is easily verified:
  
\begin{lemma}\label{exs}
Let $g \in S_{2}(\Gamma_{0}(N))$ be an elliptic newform 
and $p\geq 5$ a prime of good ordinary reduction for $g$ such that $\bar{\rho}_g$ is irreducible as $G_\Q$-representation.  
Then there exist  an imaginary quadratic field $K$ satisfying  \eqref{eq:intro-disc}, \eqref{eq:intro-Heeg}, \eqref{eq:intro-spl}, and \eqref{irr_K}, and a real quadratic field $F$ satisfying ${\rm (i)}$--${\rm (vi)}$  in Proposition~\ref{MC-lb-Sel}.
\end{lemma}

\begin{proof}
Since $p\nmid N$ by hypothesis, (i) and (iii) are independent splitting conditions  which hold for a positive proportion of real quadratic fields $F$; fix one such $F$ with odd discriminant $D_F$. Similarly, \eqref{eq:intro-spl}, \eqref{eq:intro-Heeg}, and (ii) hold for a positive proportion of imaginary quadratic fields $K$, and we can fix one such $K$ satisfying these conditions in addition to \eqref{eq:intro-disc} and \eqref{irr_K}.
%

In view of \eqref{irr_K}, if $FK$ is not a subfield of the splitting field of $\rho_g\vert_{G_K}$, then $\eqref{irr_M}$ holds for $M=FK$. For such $F$, if $\bar{\rho}_g|_{G_{F(\zeta_{p})}}$ is reducible, then $\bar{\rho}_{g}\vert_{G_F}$ is induced by an index $2$ subgroup $G_L$ of $G_F$ which contains $G_{F(\zeta_p)}$; but this forces $p=5$ and $F=\Q(\zeta_5)^+$, so the result follows.
\end{proof}
    
\begin{proof}[Proof of Theorem~\ref{thm:cyc}]
Pick an imaginary quadratic field $K$ and a real quadratic field $F$ as in Lemma~\ref{exs}. 

Then by Proposition~\ref{MC-lb-Sel}, 
\begin{equation}\label{lb-2}
\bigl(\CL_p^{\rm PR}(g/K)\cdot \CL_{p}^{\rm PR}(g^{F}/K)\bigr)\supset
{\rm ch}_{\Lambda_K}(\mathfrak{X}_{\rm ord}(g/K_\infty))\cdot{\rm ch}_{\Lambda_K}(\mathfrak{X}_{\rm ord}(g^{F}/K_\infty)).
\end{equation}
By \cite[Prop.~1.2.4]{eisenstein_cyc} and Propositions~3.6 and 3.9 in \cite{skinner-urban}, taking the image under the maps induced by the projection $\pi_+:\Gamma_K\twoheadrightarrow\Gamma_K^+$, from \eqref{lb-2} we get the divisibilities
\begin{equation}\label{eq:cyc-descend}
\begin{aligned}
\bigl(\CL_p^{}(g/\Q)&\cdot\CL_p^{}(g^K/\Q)\cdot\CL_p^{}(g^F/\Q)\cdot\CL_p^{}(g^{FK}/\Q)\bigr)\\
&\supset\pi_+\bigl({\rm ch}_{\Lambda_K}(\mathfrak{X}_{\rm ord}(g/K_\infty))\cdot{\rm ch}_{\Lambda_K}(\mathfrak{X}_{\rm ord}(g^F/K_\infty))\bigr)\\
&\supset{\rm ch}_{\Lambda}(\mathfrak{X}_{\rm ord}(g/\Q_\infty))\cdot{\rm ch}_{\Lambda}(\mathfrak{X}_{\rm ord}(g^K/\Q_\infty))\cdot{\rm ch}_{\Lambda}(\mathfrak{X}_{\rm ord}(g^F/\Q_\infty))\cdot{\rm ch}_{\Lambda}(\mathfrak{X}_{\rm ord}(g^{FK}/\Q_\infty))
\end{aligned}
\end{equation}
in $\Lambda_K^+\simeq\Lambda$. 

On the other hand, by \cite[Thm.~17.4]{kato-euler-systems} we have the divisibility
\begin{equation}\label{eq:kato-div}
\bigl(\CL_p^{}(g/\Q)\bigr)\subset{\rm ch}_{\Lambda}(\mathfrak{X}_{\rm ord}(g/\Q_\infty))
\end{equation}
in $\Lambda$ if hypothesis \eqref{im} holds, and in $\Lambda \otimes \Q_p$ otherwise. 
Similar divisibilities hold for the twists $g^K$, $g^F$, and $g^{FK}$. Noting that a proper divisibility in \eqref{eq:kato-div} would contradict \eqref{eq:cyc-descend}, the proof concludes.
\end{proof}

\begin{proof}[Proof of Theorem~\ref{thm:HPMC} and Theorem~\ref{thm:BDP}] 
Pick a real quadratic field $F$ as in Lemma~\ref{exs}. 

Then by Proposition~\ref{MC-lb-Sel}, 
\begin{equation}\label{lb-1}
\bigl(\CL_p^{\rm Gr}(g/K)\cdot \CL_{p}^{\rm Gr}(g^{F}/K)\bigr)\supset
{\rm ch}_{\Lambda_K}(\mathfrak{X}_{\rm Gr}(g/K_\infty))\cdot{\rm ch}_{\Lambda_K}(\mathfrak{X}_{\rm Gr}(g^{F}/K_\infty)).
\end{equation}
By \cite[Prop.~1.4.5]{eisenstein_cyc} and \cite[Cor.~3.4.2]{jsw}, taking the image under the maps induced by the projection $\pi_-:\Gamma_K\twoheadrightarrow\Gamma_K^-$, from \eqref{lb-1} we get the divisibilities
\begin{align*}
\bigl(\CL_{p}^{\rm BDP}(g/K)\cdot \CL_{p}^{\rm BDP}(g^{F}/K)\bigr)&\supset
\pi_-\bigl({\rm ch}_{\Lambda_K}(\mathfrak{X}_{\rm Gr}(g/K_\infty))\cdot{\rm ch}_{\Lambda_K^-}(\mathfrak{X}_{\rm Gr}(g^{F}/K_\infty))\bigr)\\
&\supset{\rm ch}_{\Lambda_K^-}(\mathfrak{X}_{\rm Gr}(g/K_\infty^-))\cdot{\rm ch}_{\Lambda_K^-}(\mathfrak{X}_{\rm Gr}(g^{F}/K_\infty^-))
\end{align*}
in $\Lambda_K^{-,{\rm ur}}$. 
Together with Theorem~\ref{HMC-lm}, this concludes the proof.
\end{proof}

\begin{proof}[Proof of Theorem~\ref{thm:2var-IMC}] 
Pick an imaginary quadratic field $K$ and two different real quadratic fields $F, F'$ as in Lemma~\ref{exs}. 

For each of the pairs $(K,F)$, $(K,F')$, the equalities of characteristic ideals of Theorem~\ref{thm:BDP}, the divisibility \eqref{lb-1}, and the non-vanishing of the $p$-adic $L$-functions $\mathcal{L}_p^{\rm BDP}(g^\cdot/K)$ for $\cdot\in\{\emptyset,F,F'\}$ yields (by an application of \cite[Lem.~3.2]{skinner-urban}) the equalities
\begin{align*}
\bigl(\CL_p^{\rm Gr}(g/K)\cdot \CL_{p}^{\rm Gr}(g^{F}/K)\bigr)&=
{\rm ch}_{\Lambda_K}(\mathfrak{X}_{\rm Gr}(g/K_\infty))\cdot{\rm ch}_{\Lambda_K}(\mathfrak{X}_{\rm Gr}(g^{F}/K_\infty)),\\
\bigl(\CL_p^{\rm Gr}(g/K)\cdot \CL_{p}^{\rm Gr}(g^{F'}/K)\bigr)&=
{\rm ch}_{\Lambda_K}(\mathfrak{X}_{\rm Gr}(g/K_\infty))\cdot{\rm ch}_{\Lambda_K}(\mathfrak{X}_{\rm Gr}(g^{F'}/K_\infty)),
\end{align*}
and therefore
\begin{equation}\label{eq:product}
\bigl(\CL_p^{\rm Gr}(g/K)^2\cdot \CL_{p}^{\rm Gr}(g^{F}/K)\cdot\CL_{p}^{\rm Gr}(g^{F'}/K)\bigr)=
{\rm ch}_{\Lambda_K}(\mathfrak{X}_{\rm Gr}(g/K_\infty))^2\cdot{\rm ch}_{\Lambda_K}(\mathfrak{X}_{\rm Gr}(g^{F}/K_\infty))\cdot{\rm ch}_{\Lambda_K}(\mathfrak{X}_{\rm Gr}(g^{F'}/K_\infty)).\nonumber
\end{equation}

When $\mathcal{V}=\emptyset$, condition (H3) in Hypothesis~\ref{H} is vacuous, and therefore the argument in the proof of Theorem~\ref{thm:BDP} applies for any real quadratic $F_0$ satisfying conditions (i)-(ii) and (iv)-(vi) in Lemma~\ref{exs}, but not necessarily (iii). Thus taking $F_0$ to be the third real quadratic field in the compositum $FF'$, as above we obtain the equality
\[
\bigl(\CL_p^{\rm Gr}(g^F/K)\cdot \CL_{p}^{\rm Gr}(g^{F'}/K)\bigr)=
{\rm ch}_{\Lambda_K}(\mathfrak{X}_{\rm Gr}(g^F/K_\infty))\cdot{\rm ch}_{\Lambda_K}(\mathfrak{X}_{\rm Gr}(g^{F'}/K_\infty)).
\]
The combination of the last two equalities immediately gives $(\CL_p^{\rm Gr}(g/K))={\rm ch}_{\Lambda_K}(\mathfrak{X}_{\rm Gr}(g/K_\infty))$, which together with Proposition~\ref{Eq-MC} yields the result.
\end{proof}

\bibliography{references}
\bibliographystyle{alpha}
\end{document}